\documentclass[11pt]{amsart}
\usepackage{amsmath}
\usepackage{amsfonts}
\usepackage{amssymb}
\usepackage{amsthm}
\usepackage{mathrsfs}       
\usepackage{graphicx}     
\usepackage[utf8]{inputenc}
\allowdisplaybreaks

\usepackage{xspace}
\usepackage[colorlinks=true]{hyperref}

\newcommand{\R}{\mathbb{R}}
\newcommand{\N}{\mathbb{N}}

\newcommand{\C}{\mathscr{C}}
\newcommand{\G}{\mathscr{G}}

\newcommand{\ud}{\,\mathrm{d}}












\newtheorem{thm}{THEOREM}[section]
\newtheorem{remark}[thm]{REMARK}
\newtheorem{lemma}[thm]{LEMMA}
\newtheorem{definition}[thm]{DEFINITION}
\newtheorem{proposition}[thm]{PROPOSITION}
\newtheorem{corollary}[thm]{COROLLARY}

\newcounter{thmbiss}


\title{Flatness for a strongly degenerate 1-D parabolic equation}

 \author[I.~Moyano]{Iv\'an Moyano}
\begin{address}{Iv\'an Moyano,
Centre de math\'ematiques Laurent Schwartz, UMR 7640, Ecole polytechnique, Palaiseau, France}
  \email{\href{mailto:ivan.moyano@math.polytechnique.fr}{ivan.moyano@math.polytechnique.fr}}
 \end{address}

\begin{document}
\maketitle

\begin{abstract}
We consider the degenerate equation $$\partial_t f(t,x)  - \partial_x \left( x^{\alpha} \partial_x f \right)(t,x) =0,$$ on the unit interval $x\in(0,1)$, in the strongly degenerate case $\alpha \in [1,2)$ with adapted boundary conditions at $x=0$ and boundary control at $x=1$. We use the flatness approach to construct explicit controls in some Gevrey classes steering the solution from any initial datum $f_0 \in L^2(0,1)$ to zero in any time $T>0$.
\end{abstract}

\textbf{Keywords--} partial differential equations; degenerate parabolic equation; boundary control; null-controllability; motion planning; flatness. 

\section{Introduction}
We consider the following control system
\begin{equation}
\left\{ \begin{array}{ll}
\partial_t f(t,x) -\partial_x \left( x^{\alpha} \partial_x \right) f(t,x)   = 0, &  (t,x) \in (0,T) \times (0,1), \\
\left( x^{\alpha} \partial_x\right) f(t,x) |_{{x=0}} = 0, & t\in (0,T), \\
f(t,1) = u(t), & t\in (0,T), \\
f(0,x) = f_0(x), & x\in (0,1),
\end{array} \right.
\label{eq:Cauchypbm}
\end{equation} where the state is the solution $f(t,x)$ and the control is the function $u(t)$. The parameter $\alpha \in [1,2)$ is fixed through the whole article. \par
The aim of this work is to construct explicit controls $u$ for the null-controllability of system (\ref{eq:Cauchypbm}) in finite time $T>0$, using the flatness method.

\subsection{Main result} 

We will make use of the Gevrey class of functions.

\begin{definition}
Let $s\in \R^+$ and $t_1,t_2\in \R$ with $t_1<t_2$. A function $h \in \C^{\infty}([t_1,t_2])$ is said to be Gevrey of order $s$ if 
\begin{equation*}
\exists M,R>0  \textrm{ such that } \sup_{t_1\leq r \leq t_2} \left| h^{(n)}(r) \right| \leq \frac{M (n!)^s}{R^n}.
\end{equation*} We then write $h \in \G^s([t_1,t_2])$.  
\end{definition} Before stating the main result, we have to recall the notion of weak solutions of the inhomogeneous system (\ref{eq:Cauchypbm}).

\begin{definition}[Weak solutions]
Let $f_0 \in L^2(0,1)$, $T>0$ and $u\in H^1(0,T)$. A weak solution of system (\ref{eq:Cauchypbm}) is a function $f\in \C^0([0,T];L^2(0,1))$ such that for every $t' \in [0,T]$ and for every 
\begin{equation}
\psi \in \C^1([0,t'];L^2(0,1)) \cap \C^0([0,t'];H^2(0,1))
\label{eq:regularitetest}
\end{equation} such that 
\begin{equation}
\label{eq:boundarytest}\left( x^{\alpha} \partial_x\right) \psi(t,x) |_{{x=0}} = \psi(t,1) = 0, \quad \forall t \in [0,t'],
\end{equation} one has 
\begin{gather}
\int_0^{t'} \int_0^1 f(t,x) \left(\partial_t \psi + \partial_x (x^{\alpha} \partial_x \psi ) \right)(t,x) \ud t \ud x \nonumber \\
=\int_0^1 f(t',x)\psi(t',x) \ud x - \int_0^1 f_0(x) \psi(0,x) \ud x + \int_0^{t'} u(t) \partial_x \psi (t,1) \ud t. \nonumber 
\end{gather}
\label{def:weak}
\end{definition}

As we show in Section 2 (see Corollary \ref{corollary:weak}), system (\ref{eq:Cauchypbm}) has a unique weak solution under suitable assumptions. Our main result is the following.

\begin{thm}
Let $f_0 \in L^2(0,1)$, $T>0$, $\tau \in (0,T)$ and $s \in (1,2)$. Then, there exists a flat output $y\in \G^s([\tau,T])$ such that the control
\begin{equation}
u(t) = \left\{ \begin{array}{ll}
0, & \textrm{if } t\in [0,\tau], \\
\sum_{k=0}^{\infty} \frac{y^{(k)}(t)}{(2-\alpha)^{2k}k! \prod_{j=1}^k\left( j + \frac{\alpha -1}{2 - \alpha} \right)}, &  \textrm{if } t\in (\tau,T],
\end{array} \right.
\label{eq:vraicontrole}
\end{equation} steers to zero at time $T$ the weak solution of system (\ref{eq:Cauchypbm}). Furthermore, the control $u$ belongs to $\G^s([0,T])$.
\label{thm:null-controllability}
\end{thm}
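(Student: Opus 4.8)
The plan is to combine the smoothing of the free parabolic evolution on $[0,\tau]$ with a flatness parametrization on $[\tau,T]$. First I would look for formal trajectories of the form $f(t,x)=\sum_{k\ge 0} g_k(x)\,y^{(k)}(t)$, where $y$ is the scalar flat output and the profiles $g_k$ are forced by inserting the ansatz into the equation: matching $\partial_t f=\partial_x(x^\alpha\partial_x f)$ term by term in $y^{(k)}$ yields the recursion $\partial_x(x^\alpha\partial_x g_k)=g_{k-1}$ with $g_0\equiv 1$, together with the left boundary condition $(x^\alpha\partial_x g_k)|_{x=0}=0$. Solving it explicitly gives $g_k(x)=x^{k(2-\alpha)}/d_k$ with $d_k=(2-\alpha)^{2k}\,k!\,\prod_{j=1}^k\bigl(j+\tfrac{\alpha-1}{2-\alpha}\bigr)$, because $\partial_x\bigl(x^\alpha\partial_x x^{k(2-\alpha)}\bigr)=k(2-\alpha)\bigl[(k-1)(2-\alpha)+1\bigr]x^{(k-1)(2-\alpha)}$ and $(k-1)(2-\alpha)+1=(2-\alpha)\bigl(k+\tfrac{\alpha-1}{2-\alpha}\bigr)$. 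Evaluating at $x=1$ reproduces exactly the control formula \eqref{eq:vraicontrole}, since $u(t)=f(t,1)=\sum_k y^{(k)}(t)/d_k$.

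Next I would make this rigorous and identify the correct Gevrey window. Using $\prod_{j=1}^k(j+c)=\Gamma(k+1+c)/\Gamma(1+c)\sim k!\,k^c$ with $c=\tfrac{\alpha-1}{2-\alpha}\ge 0$, one gets $1/d_k\le C\,(2-\alpha)^{-2k}(k!)^{-2}$. Hence, if $y\in\G^s([\tau,T])$ with $|y^{(k)}|\le M(k!)^s/R^k$, the general term obeys $|g_k(x)\,y^{(k)}(t)|\le CM\,(k!)^{s-2}\bigl(R(2-\alpha)^2\bigr)^{-k}$, so the series, as well as those obtained by applying $\partial_t$ and $\partial_x(x^\alpha\partial_x\cdot)$, converge uniformly on $[\tau,T]\times[0,1]$ precisely when $s<2$; this is where $s\in(1,2)$ enters, the lower bound $s>1$ being needed below for nontrivial flat functions. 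Term-by-term differentiation then shows that $F(t,x):=\sum_k x^{k(2-\alpha)}y^{(k)}(t)/d_k$ is a genuine weak solution on $(\tau,T)$ satisfying the degenerate flux condition at $x=0$, and that $F(T,\cdot)\equiv 0$ as soon as $y$ is flat at $T$.

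It remains to choose $y$ so that $F$ matches the free evolution at $\tau$. On $[0,\tau]$ the control vanishes, so the weak solution is $f(t,\cdot)=e^{t\mathcal A}f_0$, where $\mathcal A f=\partial_x(x^\alpha\partial_x f)$ under the flux condition at $0$ and the homogeneous Dirichlet condition $f(\cdot,1)=0$; thus $\rho:=e^{\tau\mathcal A}f_0$ is smooth. Writing $f_0=\sum_n a_n\phi_n$ and using that each eigenfunction expands as $\phi_n(x)=\phi_n(0)\sum_k(-\lambda_n)^k x^{k(2-\alpha)}/d_k$ (a Bessel-type solution of $\mathcal A\phi_n=-\lambda_n\phi_n$), I obtain $\rho(x)=\sum_k x^{k(2-\alpha)}z_k/d_k$ with $z_k=g^{(k)}(\tau)$ and $g(t):=\sum_n a_n\phi_n(0)e^{-\lambda_n t}$. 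The regularizing effect of the semigroup — the bound $\lambda^k e^{-\lambda\tau}\le\bigl(k/(e\tau)\bigr)^k$ together with the rapid decay of $e^{-\lambda_n\tau/2}$ against the at-most-polynomial growth of $|\phi_n(0)|$ and $\lambda_n$ — gives $|z_k|\le M\,k!/R^k$, i.e. the jet $(z_k)_k$ is Gevrey-$1$, hence Gevrey-$s$. I would then pick a Gevrey-$s$ cut-off $\eta$ with $\eta\equiv 1$ near $\tau$ and $\eta\equiv 0$ near $T$ (available since $s>1$) and set $y:=\eta\,g\in\G^s([\tau,T])$: this matches the prescribed jet, $y^{(k)}(\tau)=z_k$, so $F(\tau,\cdot)=\rho$, and is flat at $T$, so $F(T,\cdot)=0$.

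Finally I would glue the two phases: the function equal to $e^{t\mathcal A}f_0$ on $[0,\tau]$ and to $F$ on $[\tau,T]$ is continuous in $L^2$ at $\tau$ (both equal $\rho$) and, by the uniqueness statement of Corollary~\ref{corollary:weak}, is the weak solution associated with the control $u$ of \eqref{eq:vraicontrole}; it reaches $0$ at time $T$. The only delicate point for $u\in\G^s([0,T])$ is the junction at $\tau$: since $f(\cdot,1)\equiv 0$ on $[0,\tau]$ forces $\partial_t^m f(\tau^-,1)=[\mathcal A^m\rho](1)=0$ for all $m$, while $F(\tau,\cdot)=\rho$ gives $\partial_t^m F(\tau^+,1)=[\mathcal A^m\rho](1)$, the control $u$ is flat at $\tau$ and the two Gevrey-$s$ pieces match to infinite order. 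I expect the main obstacle to be the second step, namely the simultaneous control of the reciprocal coefficients $1/d_k$ and of the Gevrey norms needed to certify that the flat series is an admissible weak solution up to the degenerate boundary, with the smoothing estimate establishing the Gevrey-$1$ admissibility of $(z_k)_k$ as the secondary technical ingredient.
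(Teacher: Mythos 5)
Your proposal is correct, and its skeleton coincides with the paper's: the same flatness ansatz and recursion with denominators $d_k=(2-\alpha)^{2k}k!\prod_{j=1}^k\bigl(j+\tfrac{\alpha-1}{2-\alpha}\bigr)$ and the same convergence threshold $s<2$ (Proposition \ref{proposition:convergence}), the spectral decomposition in Bessel eigenfunctions (Proposition \ref{proposition:spectralfamily}) used to rewrite the free evolution at time $\tau$ in flat form, a Gevrey-$s$ cutoff available because $s>1$, and identification of the glued trajectory by uniqueness of weak solutions. Where you genuinely diverge is in the proof of the central smoothing estimate, Proposition \ref{proposition:regularisingeffect}: the paper bounds $Y^{(n)}$ on all of $[\sigma,T]$ by a series-versus-integral comparison of $\sum_k h_n^{\alpha}(j_{\nu,k})$ over the Bessel zeros (using $j_{\nu,k+1}-j_{\nu,k}\to\pi$) together with Gamma-function asymptotics, whereas you bound the jet $z_k=g^{(k)}(\tau)$ by the elementary inequality $\sup_{\lambda>0}\lambda^k e^{-\lambda\tau/2}=(2k/(e\tau))^k\le k!\,(2/\tau)^k$ and absorb the growth of $|\varphi_n(0)|$ into the remaining factor $e^{-\lambda_n\tau/2}$; this is shorter, avoids the integral comparison entirely, and the very same bound justifies the Fubini interchange needed to write $\rho=f(\tau,\cdot)$ as $\sum_k z_k x^{k(2-\alpha)}/d_k$ (your $g$ is exactly the paper's $Y$ of (\ref{eq:flatoutput}), since $\varphi_n(0)=\sqrt{2-\alpha}\,j_{\nu,n}^{\nu}/\bigl(2^{\nu}\Gamma(\nu+1)|J_{\nu+1}(j_{\nu,n})|\bigr)$). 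One debt you should acknowledge: the ``at most polynomial growth'' of $|\varphi_n(0)|$ is not free — it is precisely the lower bound $|J_{\nu+1}(j_{\nu,k})|\ge c\,j_{\nu,k}^{-1/2}$ of Lemma \ref{lemma:asymptoticBesselaux}, proved in the appendix from the large-argument asymptotics of $J_{\nu}$. The gluing is also organized differently: the paper sets $y=\phi_s\bigl(\tfrac{t-\tau}{T-\tau}\bigr)Y$ on $(0,T]$, so that $\tilde f$ coincides with the free solution on all of $(0,\tau)$ and the weak formulation follows by letting $\sigma\to0^+$, and in particular $u=\tilde f(\cdot,1)=f(\cdot,1)=0$ on $(0,\tau]$ makes the Gevrey regularity of $u$ across $\tau$ automatic; you instead match only the infinite jet at $\tau$ (which suffices, since the flat representation of $\rho$ has exactly the coefficients $z_k/d_k$) and then must check separately that $u$ is flat at $\tau^+$ — your verification, amounting to $\sum_k(-\lambda_n)^k/d_k=\varphi_n(1)/\varphi_n(0)=0$, is correct and closes that point. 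Both routes are sound; yours buys a more elementary smoothing estimate, the paper's buys a cleaner junction argument.
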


\subsection{Previous work}
\subsubsection{Null-controllability} 
The null-controllability of system 
\begin{equation*}
\left\{ \begin{array}{ll}
\partial_t f(t,x) -\partial_x \left( x^{\alpha} \partial_x \right) f(t,x)   = 1_{\omega}(x)v(t,x), &  (t,x) \in (0,T) \times (0,1), \\
\left( x^{\alpha} \partial_x\right) f(t,x) |_{{x=0}} = 0, & t\in (0,T), \\
f(t,1) = 0, & t\in (0,T), \\
f(0,x) = f_0(x), & x\in (0,1),
\end{array} \right.
\end{equation*} where $\omega \subset (0,1)$, has been studied by P. Cannarsa, P. Martinez and J. Vancostenoble in \cite{CMV}. Their strategy relies on appropriate Carleman estimates. To deal with the degeneracy at $\left\{x=0\right\}$, they use an adequate functional framework that we recall in Section 2, and Hardy-type inequalities. \par
 
The null-controllability of system (\ref{eq:Cauchypbm}) is a consequence of the internal null-controllability and the extension principle, since the control is located on $\left\{ x= 1 \right\}$, away from the degeneracy. The interest of the present article is to provide explicit controls.\par 
In the case of a control located on $\left\{x=0\right\}$, an approximate controllability result for $\alpha \in [0,1)$ has been proven by P. Cannarsa, J. Tort and M. Yamamoto in \cite{CTY} using Carleman estimates. The exact controllability was later proven by M. Gueye in \cite{Gueye} again in the weakly degenerate case $\alpha \in [0,1)$ by using the transmutation method.
\par
Other related one-dimensional problems have been treated: see \cite{CMV4, CMV5, ACF}, see \cite{CFR} for a non-divergence setting, see \cite{V} for a system with a singular potential. A multi-dimensional case has been studied in \cite{CMV2}. \par

\subsubsection{The flatness method} 
The main interest of the flatness method is to provide explicit controls. It has been developed for finite-dimensional systems (see \cite{Fliess}) and then generalised to some infinite-dimensional systems; see \cite{MRR} for the heat equation on a cylindrical domain with boundary control, \cite{MRR2} for one-dimensional parabolic equations with varying coefficients and \cite{MRR3} for the one-dimensional Schr\"{o}dinger equation. However, the strongly degenerate case $\alpha \in [1,2)$ considered in Theorem \ref{thm:null-controllability} does not belong to the class concerned in \cite{MRR2}. Our goal is to adapt the flatness method to this case.

\subsection{Open questions and perspectives}
The flatness method may also be successful on similar equations, for instance in non-divergence form as in \cite{CFR}. For the time being, this is an open problem. 

\subsection{Structure of the article}
In Section 2 we recall a well-posedness result and the functional framework. In Section 3 we derive, thanks to an heuristic method, an explicit solution of system (\ref{eq:Cauchypbm}) consisting on a formal series development. We prove its convergence, provided that the corresponding flat output is in a Gevrey class. In Section 4 we discuss the spectral analysis of the associated stationary problem. In Section 5 we study the regularising effect of system (\ref{eq:Cauchypbm}) when $u = 0$. In Section 6 we construct an appropriate flat output steering the solution of (\ref{eq:Cauchypbm}) to zero, which concludes the proof of Theorem \ref{thm:null-controllability}. Finally, we give in Appendices A and B a brief account of some results concerning the Gamma and Bessel functions needed in the proofs.

\subsection{Notation}
Since all the functions appearing in the article are real-valued, we omit any explicit mention by writing, for instance, $L^2(0,1)$ instead of $L^2((0,1);\R)$. If $h \in \C^k([t_1,t_2])$, for some $t_1,t_2 \in \R$ with $t_1<t_2$ and $k\in \N^*$, we will denote by $h'(t)$ and $h''(t)$ its first and second derivatives and by $h^{(n)}(t)$, for every $n\in \N$, $2 < n \leq k$, the $n-$th derivative. \par
If $h_1,h_2:\R \rightarrow \R$ are two real-valued functions and $\mu \in \overline{\R}$, we will write $h_1 \sim h_2$ as $x \rightarrow \mu$ to denote that $\lim_{t \rightarrow \mu} \frac{h_1(t)}{h_2(t)} =1$. \par
We will denote by $\langle\cdot, \cdot \rangle$ the inner product in $L^2(0,1)$.

\section{Well-posedness} 
We consider, for $T>0$ and $f_0 \in L^2(0,1)$, the following system 
\begin{equation}
\left\{ \begin{array}{ll}
\partial_t f(t,x) - \partial_x \left( x^{\alpha} \partial_x \right) f(t,x) = h(t,x) , & (t,x) \in (0,T) \times (0,1), \\
\left(x^{\alpha} \partial_x \right) f (t,x)|_{x=0} =0, & t \in (0,T), \\
f(t,1) = 0, & t\in (0,T), \\
f(0,x) = f_0(x), & x \in (0,1).
\end{array} \right.
\label{eq:homogeneoussystem}
\end{equation} We recall below a well-posedness result for system (\ref{eq:homogeneoussystem}) proven originally in \cite{CMV5}. The strategy of the proof consists in a semigroup approach and the introduction of adequate weighted Sobolev spaces, that we recall below. We refer to \cite{CMV5, CMP} for further details. \par
\bigskip
We introduce the weighted Sobolev space
\begin{eqnarray}
H_{\alpha}^1(0,1) := \left\{ f \in L^2(0,1); \, f \textrm{ is loc. absolutely continuous on } (0,1], \right. && \nonumber \\ 
 \left.  x^{\frac{\alpha}{2}} f' \in L^2(0,1) \textrm{ and } f(1)=0 \right\},&& \nonumber  
\end{eqnarray} endowed with the norm
\begin{equation*}
\|f\|_{H^1_{\alpha}(0,1)}^2 := \|f\|_{L^2(0,1)}^2 + \|x^{\frac{\alpha}{2}} f'\|_{L^2(0,1)}^2, \quad \forall f \in H^1_{\alpha}(0,1).
\end{equation*} We remark that $H^1_{\alpha}(0,1)$ is a Hilbert space with the scalar product
\begin{equation}
\langle f, g  \rangle_{H^1_{\alpha}} := \int_0^1 f(x) g(x) \ud x  + \int_0^1 x^{\alpha} f'(x) g'(x) \ud x, \quad  \forall f, g \in H^1_{\alpha}(0,1). 
\label{eq:scalarproduct}
\end{equation} 

\begin{proposition}[\cite{CMV5}, Proposition 3.2 and Theorem 3.1] 
Let
\begin{equation}
\left\{ \begin{array}{ll}
D(A):= \left\{ f \in H^1_{\alpha}(0,1); \, x^{\alpha}f'\in H^1(0,1) \right\}, \\
Af:= -(x^{\alpha} f')'.
\end{array} \right.
\label{eq:definitionofA}
\end{equation} Then, $A:D(A) \rightarrow L^2(0,1) $ is a closed self-adjoint positive operator with dense domain. As a consequence, $A$ is the infinitesimal generator of a strongly continuous semigroup, and for any $f_0 \in L^2(0,1)$, and $h\in L^2((0,T)\times(0,1))$ there exists a unique weak solution of system (\ref{eq:homogeneoussystem}), i.e., a function $f \in \C^0([0,T];L^2(0,1)) \cap L^2(0,T;H^1_{\alpha}(0,1))$  such that 
\begin{equation*}
f(t) = S(t)f_0 + \int_0^t S(t-s)h(s) \ud s, \quad \textrm{in } L^2(0,1), \quad \forall t \in [0,T].
\end{equation*}
\label{proposition:Aisselfadjoint}
\end{proposition}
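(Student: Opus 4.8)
The plan is to recover $A$ as the self-adjoint operator canonically associated with the symmetric bilinear form
\[
a(f,g) := \int_0^1 x^\alpha f'(x) g'(x)\ud x, \qquad f,g\in V:=H^1_\alpha(0,1),
\]
on the Hilbert space $H:=L^2(0,1)$, and then to invoke the Lumer--Phillips theorem for the generation of the semigroup.

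First I would check that $D(A)$ is dense: every $\varphi\in\Cinfc(0,1)$ satisfies $\varphi(1)=0$, vanishes near $x=0$ so that $(x^\alpha\varphi')|_{x=0}=0$, and has $x^\alpha\varphi'\in H^1(0,1)$; thus $\Cinfc(0,1)\subset D(A)$, which is dense in $H$. Symmetry and positivity come from integration by parts: for $f,g\in D(A)$,
\[
\langle Af,g\rangle=-\int_0^1 (x^\alpha f')' g\ud x=\bigl[-x^\alpha f' g\bigr]_0^1+a(f,g)=a(f,g),
\]
both boundary terms vanishing because $g(1)=0$ and $(x^\alpha f')|_{x=0}=0$; taking $g=f$ gives $\langle Af,f\rangle=\int_0^1 x^\alpha(f')^2\ud x\ge 0$, so $A$ is symmetric and non-negative (and strictly positive on $f\neq 0$, since $a(f,f)=0$ together with $f(1)=0$ forces $f\equiv 0$).

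The heart of the matter is self-adjointness. The shifted form $a_1:=a+\langle\cdot,\cdot\rangle$ is exactly the inner product $\langle\cdot,\cdot\rangle_{H^1_\alpha}$ of \eqref{eq:scalarproduct}, hence a closed, continuous, coercive symmetric form on $V$; the general theory of such forms associates with it a unique self-adjoint operator $B\ge\id$ on $H$. By Lax--Milgram, for every $F\in H$ there is a unique $f\in V$ with $a_1(f,g)=\langle F,g\rangle$ for all $g\in V$, so $B$ is onto. Testing first against $g\in\Cinfc(0,1)$ shows $-(x^\alpha f')'+f=F$ in $\D'(0,1)$, whence $(x^\alpha f')'=f-F\in H$ and therefore $x^\alpha f'\in H^1(0,1)$, i.e. $f\in D(A)$; testing then against arbitrary $g\in V$ and integrating by parts recovers the flux condition $(x^\alpha f')|_{x=0}=0$. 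This identifies $D(B)=D(A)$ and $B=A+\id$, so $A$ is self-adjoint and closed, with $A\ge 0$.

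Finally, writing the evolution as $\partial_t f=-Af+h$, the operator $-A$ is dissipative, $\langle -Af,f\rangle=-a(f,f)\le 0$, and $\mathrm{Ran}(\id-(-A))=\mathrm{Ran}(\id+A)=H$ by the previous step; the Lumer--Phillips theorem then yields a strongly continuous contraction semigroup $S(t)=e^{-tA}$. Setting $f(t):=S(t)f_0+\int_0^t S(t-s)h(s)\ud s$, the continuity $f\in\C^0([0,T];H)$ is the standard mild-solution estimate, while $f\in L^2(0,T;V)$ and uniqueness both follow from the parabolic energy identity $\tfrac12\tfrac{d}{dt}\|f\|_{L^2}^2+a(f,f)=\langle h,f\rangle$. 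The step I expect to be genuinely delicate is the elliptic-regularity identification of the form domain with the concrete $D(A)$ at the degenerate endpoint: making sense of the flux trace $(x^\alpha f')(0)$ and controlling $f$ near $x=0$ rests on weighted Hardy-type inequalities adapted to $\alpha\in[1,2)$, which is precisely where the strong degeneracy must be handled.
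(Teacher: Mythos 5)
The paper itself offers no proof of Proposition \ref{proposition:Aisselfadjoint}: it is recalled from \cite{CMV5} with the remark that the proof there rests on a semigroup approach in the weighted space $H^1_\alpha(0,1)$. Your proposal --- the self-adjoint operator associated with the closed, coercive symmetric form $a_1=\langle\cdot,\cdot\rangle_{H^1_\alpha}$ from (\ref{eq:scalarproduct}), Lax--Milgram for surjectivity of $\id+A$, Lumer--Phillips for generation, and the energy identity for the mild solution --- is essentially that same route, and its overall architecture is correct.

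There is, however, one step you assert before justifying it, and you have only half-flagged it. In the symmetry and positivity computation you discard the boundary term at $x=0$ ``because $(x^\alpha f')|_{x=0}=0$'', but this flux condition is \emph{not} part of the definition of $D(A)$ in (\ref{eq:definitionofA}); for $\alpha\in[1,2)$ it is an automatic property of the domain, and it must be proved before any integration by parts on $D(A)$ (it is needed not only for symmetry but also for the converse inclusion $D(A)\subset D(B)$, via uniqueness for $-(x^\alpha w')'+w=0$ with $w\in D(A)$). Fortunately the proof is elementary and does not require the weighted Hardy inequalities you invoke: since $x^\alpha f'\in H^1(0,1)\subset \C^0([0,1])$, if $(x^\alpha f')(0)=c\neq 0$ then $f'(x)=(c+o(1))x^{-\alpha}$ near $0$, so $\int_0 x^\alpha f'(x)^2\ud x\geq \frac{c^2}{2}\int_0 x^{-\alpha}\ud x=\infty$ for $\alpha\geq 1$, contradicting $f\in H^1_\alpha(0,1)$. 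A second, related point you leave untouched: the other factor in the boundary term (a test function $g\in H^1_\alpha$, or $f$ itself) need not be bounded near $0$ when $\alpha\in[1,2)$; Cauchy--Schwarz only yields $|g(x)|=O\bigl(x^{\frac{1-\alpha}{2}}\bigr)$ for $\alpha>1$ and $O\bigl(\sqrt{\log(1/x)}\bigr)$ for $\alpha=1$. So the vanishing of the product must be checked: from $(x^\alpha f')(0)=0$ and $(x^\alpha f')'\in L^2$ one gets $|x^\alpha f'(x)|\leq \sqrt{x}\,\|(x^\alpha f')'\|_{L^2}$, and $\sqrt{x}\cdot x^{\frac{1-\alpha}{2}}=x^{1-\frac{\alpha}{2}}\to 0$ (resp.\ $\sqrt{x\log(1/x)}\to 0$), precisely because $\alpha<2$. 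With these two observations inserted, your integration by parts, the identification $D(B)=D(A)$, and the remainder of the argument (distributional identification $x^\alpha f'\in H^1$, Lumer--Phillips, Duhamel formula and the parabolic energy identity) go through as written.
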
 
As a consequence, using classical arguments (see for instance \cite[Section 2.5.3]{Coron}), we deduce the following result. 
\begin{corollary}
Let $T>0$, $f_0 \in L^2(0,1)$ and $u \in H^1(0,T)$. Then, system (\ref{eq:Cauchypbm}) has a unique weak solution (see Definition \ref{def:weak}).
\label{corollary:weak}
\end{corollary}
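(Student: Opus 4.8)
The plan is to remove the inhomogeneous Dirichlet datum at $x=1$ by a lifting, reduce to the homogeneous system \eqref{eq:homogeneoussystem} covered by Proposition \ref{proposition:Aisselfadjoint}, and then verify the transposition identity of Definition \ref{def:weak}. First I would fix a cut-off $\phi\in\C^{\infty}([0,1])$ with $\phi\equiv0$ near $x=0$ and $\phi(1)=1$, and set $R(t,x):=u(t)\phi(x)$. Since $u\in H^1(0,T)\hookrightarrow\C^0([0,T])$, we get $R\in\C^0([0,T];L^2(0,1))$ with $R(t,1)=u(t)$, $(x^\alpha\d_x R)|_{x=0}=0$ (because $\phi$ vanishes near $0$) and $R(0,\cdot)=u(0)\phi$. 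Setting $g:=f-R$, I would require $g$ to solve \eqref{eq:homogeneoussystem} with initial datum $g_0:=f_0-u(0)\phi\in L^2(0,1)$ and source $h:=-u'\phi+u\,(x^\alpha\phi')'$. One checks $h\in L^2((0,T)\times(0,1))$: indeed $u'\in L^2(0,T)$, $u\in L^\infty(0,T)$, $\phi\in L^\infty(0,1)$ and $(x^\alpha\phi')'\in L^2(0,1)$ since $x^\alpha\phi'\in H^1(0,1)$. Proposition \ref{proposition:Aisselfadjoint} then provides a unique $g\in\C^0([0,T];L^2(0,1))\cap L^2(0,T;H^1_\alpha(0,1))$, and I set $f:=g+R\in\C^0([0,T];L^2(0,1))$.

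The substantive step is to check that $f$ meets Definition \ref{def:weak}. I would first record the transposition identity for $g$: for admissible $\psi$ as in \eqref{eq:regularitetest}--\eqref{eq:boundarytest} and any $t'\in[0,T]$, differentiating $t\mapsto\int_0^1 g\psi\ud x$ and using $\d_t g=\d_x(x^\alpha\d_x g)+h$ together with the self-adjointness of $A$ (which transfers $\d_x(x^\alpha\d_x\cdot)$ from $g$ onto $\psi$ with no boundary contribution, as both satisfy the homogeneous conditions) yields
\begin{gather*}
\int_0^{t'}\!\int_0^1 g\,\big(\d_t\psi+\d_x(x^\alpha\d_x\psi)\big)\ud x\ud t
=\int_0^1 g(t',x)\psi(t',x)\ud x-\int_0^1 g_0(x)\psi(0,x)\ud x-\int_0^{t'}\!\int_0^1 h\psi\ud x\ud t.
\end{gather*}
Substituting $g=f-u\phi$, $g_0=f_0-u(0)\phi$ and the formula for $h$, the $\phi$-dependent terms recombine. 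Integrating $\int u\phi\,\d_t\psi$ by parts in $t$ cancels the $\int u'\phi\psi$ contribution and the endpoint pairings at $t=0,t'$; the remainder is $\int_0^{t'}u(t)\big(\int_0^1[\phi\,\d_x(x^\alpha\d_x\psi)-(x^\alpha\phi')'\psi]\ud x\big)\ud t$. Two integrations by parts in $x$ collapse the inner integral to $[\phi\,x^\alpha\d_x\psi-x^\alpha\phi'\psi]_0^1$, which vanishes at $x=0$ (there $\phi\equiv0$ and, since $\psi(t,\cdot)\in H^2(0,1)\hookrightarrow\C^1([0,1])$, $(x^\alpha\d_x\psi)|_{x=0}=0$) and equals $\d_x\psi(t,1)$ at $x=1$ (using $\phi(1)=1$, $\psi(t,1)=0$). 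Hence the remainder is exactly $\int_0^{t'}u(t)\,\d_x\psi(t,1)\ud t$, so $f$ satisfies precisely the identity of Definition \ref{def:weak}.

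Uniqueness I would obtain by duality, as in \cite[Section 2.5.3]{Coron}. If $f_1,f_2$ are weak solutions with the same data, then $w:=f_1-f_2\in\C^0([0,T];L^2(0,1))$ satisfies $w(0)=0$ (let $t'\to0$) and $\int_0^{t'}\!\int_0^1 w(\d_t\psi+\d_x(x^\alpha\d_x\psi))\ud x\ud t=\int_0^1 w(t',x)\psi(t',x)\ud x$ for all admissible $\psi$. Taking $\psi(t,\cdot)=S(t'-t)\varphi$, which solves $\d_t\psi+\d_x(x^\alpha\d_x\psi)=0$ with $\psi(t',\cdot)=\varphi$, forces $\langle w(t'),\varphi\rangle=0$, whence $w(t')=0$ as $\varphi$ runs over a dense family. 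The delicate point — and where the strong degeneracy bites — is the admissibility of these backward test functions: for $\alpha\in[1,2)$ the eigenfunctions of $A$, and hence $S(t'-t)\varphi$, behave like $1+c\,x^{2-\alpha}$ near $x=0$, so their second derivative is not square-integrable and they do not literally lie in the $H^2(0,1)$ class of Definition \ref{def:weak}. I expect this to be the main obstacle. I would resolve it by running the duality with $\varphi\in\Cinfc(0,1)$ and approximating $S(t'-t)\varphi$ in the graph norm of $A$ by genuinely admissible test functions — i.e. using that this class is a core for $A$ — the residual in the identity being controlled by the $\C^0([0,T];L^2(0,1))$ norm of $w$; the endpoint terms at $x=0$ remain harmless because every admissible $\psi$ is $\C^1$ up to $0$ and the weight $x^\alpha$ annihilates them there. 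This reduces uniqueness to the abstract semigroup statement of Proposition \ref{proposition:Aisselfadjoint}.
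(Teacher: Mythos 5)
Your proposal is correct and follows essentially the same route as the paper: lift the boundary datum by $u(t)$ times a fixed profile (the paper takes $\theta(x)=x^2$, you take a cutoff vanishing near $x=0$ --- an immaterial difference), reduce to the homogeneous system and invoke Proposition \ref{proposition:Aisselfadjoint}, verify the transposition identity of Definition \ref{def:weak}, and obtain uniqueness by reducing the difference of two weak solutions to the homogeneous problem. The only divergence is expository: where the paper compresses the verification into ``one shows'' and outsources the uniqueness/duality argument to \cite[Section 2.5.3]{Coron}, you carry both out explicitly and correctly flag (and plausibly resolve, via a graph-norm core approximation) the genuine subtlety that for $\alpha\in[1,2)$ the backward trajectories $S(t'-t)\varphi$ behave like $1+c\,x^{2-\alpha}$ near $x=0$ and so fail the $H^2(0,1)$ regularity required of test functions in Definition \ref{def:weak}.
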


\begin{proof}
Let $f_0 \in L^2(0,1)$, $u \in H^1(0,T)$ and 
\begin{equation*}
\theta(x) := x^2, \quad x\in [0,1]. 
\end{equation*}We consider the system 
\begin{equation*}
\left\{ \begin{array}{ll}
\left( \partial_t  - \partial_x \left( x^{\alpha} \partial_x  \right) \right)g (t,x) = H(t,x), & (t,x) \in (0,T) \times (0,1), \\
\left(x^{\alpha} \partial_x \right) g (t,x)|_{x=0} =0, & t \in (0,T), \\
g(t,1) = 0, & t\in (0,T), \\
g(0,x) = f_0(x) - u(0)\theta(x), & x \in (0,1),
\end{array} \right.
\end{equation*} with 
\begin{equation*}
H(t,x) : = -u'(t)\theta(x) - u(t) A\theta(x), \quad \forall (t,x) \in (0,T)\times (0,1).
\end{equation*} Since $H \in L^2((0,T)\times(0,1))$, by Proposition \ref{proposition:Aisselfadjoint} there exists a unique weak solution $g \in \C^0([0,T];L^2(0,1)) \cap L^2(0,T;H^1_{\alpha}(0,1))$ of this problem. We set
\begin{equation*}
f(t,x) := g(t,x) + u(t)\theta(x).
\end{equation*} Then, using the integral formulation associated to $g$, one shows that $f$ is a weak solution of system (\ref{eq:Cauchypbm}) in the sense of Definition \ref{def:weak}. \par 
The uniqueness follows since, if $f_1$ and $f_2$ are weak solutions of (\ref{eq:Cauchypbm}), then $f_1 - f_2$ is the unique weak solution of system (\ref{eq:homogeneoussystem}) with $h\equiv 0$, and then by Proposition \ref{proposition:Aisselfadjoint}, $f_1 - f_2 = 0$. 
\end{proof}

\section{Explicit solution}
\subsection{Heuristics}
We consider the following formal expansion
\begin{equation*}
f(t,x) = \sum_{k=0}^{\infty} c_{2k}(t) \left( x^{1-\frac{\alpha}{2}}\right)^{2k}, \quad \forall (t,x) \in (0,T)\times (0,1).
\end{equation*} where $(c_{2k}(t))_{k\in \N}$ is a sequence of real numbers. We formally have
\begin{eqnarray}
\partial_x \left( x^{\alpha} \partial_x f \right) (t,x) &=& \sum_{k=0}^{\infty} c_{2(k+1)}(t) (2-\alpha)^2(k+1)\left[ k +1 +\frac{\alpha -1}{2 - \alpha} \right]\left( x^{1-\frac{\alpha}{2}}\right)^{2k}, \nonumber \\
\partial_t f(t,x) &=& \sum_{k=0}^{\infty} c_{2k}'(t) \left( x^{1 - \frac{\alpha}{2}} \right)^{2k}. \nonumber 
\end{eqnarray} If $f$ solves (\ref{eq:Cauchypbm}), then the following recurrence relation holds 
\begin{equation*}
c_{2(k+1)}(t) = \frac{c_{2k}'(t)}{(2-\alpha)^2 (k+1) \left( k + 1 + \frac{\alpha -1}{2 - \alpha} \right)} , \quad \forall k \in \N.
\end{equation*} Choosing a flat output $ c_0(t) := y(t)$ and iterating, we readily have
\begin{equation*}
c_{2k}(t) = \frac{ y^{(k)}(t)}{(2-\alpha)^{2k} k! \prod_{j=1}^k \left( j + \frac{\alpha -1}{2 -\alpha}  \right)}, \quad \forall t \in (0,T), \, \forall k \in \N.
\end{equation*} This gives a formal solution of (\ref{eq:Cauchypbm}),
\begin{equation}
f(t,x) = \sum_{k=0}^{\infty} \frac{ y^{(k)}(t) \left( x^{1 - \frac{\alpha}{2}} \right)^{2k}}{(2-\alpha)^{2k} k! \prod_{j=1}^k \left( j + \frac{\alpha-1}{2 -\alpha}  \right)},
\label{eq:explicitsolution}
\end{equation} and a control given by $u(t) = f(t,1)$, which is
\begin{equation}
u(t) = \sum_{k=0}^{\infty} \frac{ y^{(k)}(t)}{(2-\alpha)^{2k} k! \prod_{j=1}^k \left( j + \frac{\alpha -1}{2 -\alpha}  \right)}.
\label{eq:explicitcontrol}
\end{equation}

\subsection{Pointwise solutions}
The goal of this section is to introduce a notion of pointwise solution of system (\ref{eq:Cauchypbm}) to give a sense to the heuristics made in the previous section. \par
We define
\begin{equation*}
\C^2_{\alpha}(0,1) := \left\{ f \in \C^0([0,1]) \cap \C^2((0,1)) \textrm{ such that } x^{\alpha}f'(x) \in \C^0([0,1))  \right\}.
\end{equation*}

\begin{definition}[Pointwise solution] Let $t_1,t_2 \in \R$ with $t_1 < t_2$. Let $f_{t_1}\in \C^0(0,1)$ and $u \in \C^0([t_1,t_2])$. A pointwise solution of system 
\begin{equation}
\left\{ \begin{array}{ll}
\partial_t f(t,x) - \partial_x \left( x^{\alpha} \partial_x f \right)(t,x) = 0, & (t,x) \in (t_1,t_2) \times (0,1), \\
x^{\alpha} \partial_x f(t,x)|_{x=0} = 0, & t \in (t_1,t_2), \\
f(t,1) =u(t), & t\in (t_1,t_2), \\
f(t_1,x) = f_{t_1}(x), & x \in (0,1),
\end{array} \right.
\label{eq:pointwiseCauchypbm}
\end{equation}
is a function $f \in \C^0([t_1,t_2]\times[0,1]) \cap \C^1((t_1,t_2)\times (0,1))$ such that 
\begin{enumerate}
\item $f(t,\cdot) \in \C^2_{\alpha}(0,1)$, $\forall t \in (t_1,t_2)$,  
\item $\partial_t f -\partial_x (x^{\alpha} \partial_x f) = 0 $  pointwisely in $(t_1,t_2)\times (0,1)$,
\item $\lim_{x\rightarrow 0^+} x^{\alpha} \partial_x f(t,x) = 0$, $\forall t \in (t_1,t_2)$, 
\item $f(t,1) = u(t)$, $\forall t \in (t_1,t_2)$,
\item $f(t_1,x) = f_{t_1}(x)$, $\forall x\in (0,1)$.
\end{enumerate}
\label{def:pointwise}
\end{definition}

\begin{remark}
The usual energy argument proves that, given $u\in \C^0([t_1,t_2])$, the pointwise solution of system (\ref{eq:pointwiseCauchypbm}) is unique. We observe that, changing parameters adequately in Definition \ref{def:weak} a pointwise solution of  (\ref{eq:pointwiseCauchypbm}) is also a weak solution.
\label{remark:uniqueness}
\end{remark}

\subsection{Convergence}
The goal of this section is the proof of the following result.

\begin{proposition}
Let $t_1, t_2 \in \R$, with $t_1<t_2$. If $y \in \G^s([t_1,t_2])$ for some $s \in (0,2)$, then 
\begin{enumerate}
\item the control $u$ given by (\ref{eq:explicitcontrol}) is well defined and belongs to $\G^s([t_1,t_2])$,
\item the function given by (\ref{eq:explicitsolution}) is a pointwise solution (see Definition \ref{def:pointwise}) of system (\ref{eq:pointwiseCauchypbm}) in $(t_1,t_2)\times (0,1)$ with $u$ given by (\ref{eq:explicitcontrol}) and initial datum 
\begin{equation*}
f_{t_1}(x) := \sum_{k=1}^{\infty} \frac{y^{(k)}(t_1) \left( x^{1 - \frac{\alpha}{2}} \right)^{2k}}{(2-\alpha)^{2k} k! \prod_{j=1}^k \left( j + \frac{\alpha-1}{2-\alpha} \right)}, \quad \forall x\in [0,1].
\end{equation*}
\end{enumerate}
\label{proposition:convergence}
\end{proposition}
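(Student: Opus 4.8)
The plan is to establish Gevrey-type bounds on the coefficients

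$$a_k := \frac{1}{(2-\alpha)^{2k} k! \prod_{j=1}^k \left( j + \frac{\alpha-1}{2-\alpha} \right)}$$

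and then use them to control the series (\ref{eq:explicitcontrol}) and (\ref{eq:explicitsolution}) together with all the series obtained by term-by-term differentiation. First I would analyze the product $P_k := \prod_{j=1}^k (j + \frac{\alpha-1}{2-\alpha})$. Since $\frac{\alpha-1}{2-\alpha} =: \nu \geq 0$ for $\alpha \in [1,2)$, this is $\frac{\Gamma(k+1+\nu)}{\Gamma(1+\nu)}$, so by the Gamma-function asymptotics recalled in the appendix, $k! \, P_k$ grows like $(k!)^2$ up to polynomial and exponential factors; more precisely $k!\,P_k \geq C (k!)^2 / R_0^k$ for suitable constants. Hence $|a_k| \leq M_0 R_0^k / (k!)^2$ for some $M_0, R_0 > 0$ depending only on $\alpha$. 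This quadratic factorial gain in the denominator is exactly what compensates the Gevrey growth of order $s < 2$ in the numerator.

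The hard part will be proving convergence of the series and of its derivatives simultaneously, which is what is required to check each clause of Definition \ref{def:pointwise}. Using the Gevrey hypothesis $|y^{(k)}(t)| \leq M (k!)^s / R^k$ on $[t_1,t_2]$, I would first bound the control series by $\sum_k |a_k| |y^{(k)}(t)| \leq M M_0 \sum_k (k!)^{s-2} (R_0/R)^k$, which converges absolutely since $s-2 < 0$, giving claim (1); to show $u \in \G^s([t_1,t_2])$ I would differentiate the control series $m$ times, bound $|u^{(m)}(t)| \leq \sum_k |a_k| |y^{(k+m)}(t)|$, and insert the Gevrey estimate for $y^{(k+m)}$, using the elementary inequality $(k+m)! \leq 2^{k+m} k! \, m!$ to factor out the $m$-dependence and recover a bound of the form $\tilde M (m!)^s / \tilde R^m$. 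The same device handles $\partial_t^m$ of the full solution series, now carrying the extra factor $(x^{1-\alpha/2})^{2k} \leq 1$ for $x \in [0,1]$, which only helps.

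For the spatial structure in claim (2), I would set $z := x^{1-\alpha/2}$ and write the solution as $f(t,x) = \sum_k a_k y^{(k)}(t) z^{2k}$, treating it as a power series in $z \in [0,1]$. Uniform convergence on $[t_1,t_2] \times [0,1]$ gives $f \in \C^0$; to obtain the regularity $f(t,\cdot) \in \C^2_\alpha(0,1)$ and the mixed $\C^1$ regularity in $(t_1,t_2) \times (0,1)$, I would justify differentiating term by term in both $t$ and $x$ using the coefficient bounds above, noting that $\partial_x (z^{2k}) = 2k(1-\frac{\alpha}{2}) x^{-\alpha/2} z^{2k-1}$ so that $x^\alpha \partial_x f$ again has a convergent series with the degeneracy absorbed. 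The Neumann-type condition (3) follows because each term of $x^\alpha \partial_x f$ carries a factor $x^{\alpha/2} z^{2k-1} = x^{(1-\alpha/2)(2k-1)+\alpha/2 - \cdots}$, a positive power of $x$ vanishing as $x \to 0^+$; condition (4) is the definition of $u$ evaluated at $z=1$; the PDE (2) follows by reinserting the differentiated series and invoking the recurrence relation derived heuristically, which makes the coefficients telescope to zero; and the initial condition (5) is just evaluation at $t = t_1$, matching $f_{t_1}$ by definition. The principal technical obstacle throughout is the legitimacy of term-by-term differentiation, which is secured in every instance by the locally uniform convergence guaranteed by the $(k!)^{s-2}$ decay.
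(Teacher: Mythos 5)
Your proposal is correct and takes essentially the same route as the paper's proof: the key bound $k!\prod_{j=1}^{k}\left(j+\tfrac{\alpha-1}{2-\alpha}\right)\geq (k!)^2$ (which you derive via Gamma asymptotics where the paper simply uses $\tfrac{\alpha-1}{2-\alpha}\geq 0$), the inequality $(k+n)!\leq 2^{k+n}k!\,n!$ to extract the $(n!)^s/\tilde R^n$ Gevrey estimate, and term-by-term differentiation in $t$ and $x$ justified by (locally) uniform convergence, with the weight $x^{\alpha}$ absorbing the degeneracy of $\partial_x f$ near $x=0$ so the Neumann condition follows from a positive power of $x$, exactly as in the paper's Steps 1--5. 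The only cosmetic differences are your substitution $z=x^{1-\alpha/2}$ and your implicit (rather than the paper's explicit, via the threshold $k_0$ with $(2-\alpha)k_0\geq 1$) treatment of the finitely many terms whose $x$-derivatives are unbounded at $x=0$, which your restriction to the open interval $(0,1)$ handles equally well.
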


\begin{proof}
Let $M,R>0$ be such that $|y^{(n)}(t)| \leq \frac{M n!^s}{R^n},$ for any $n \in \N, \, t \in [t_1,t_2]$.

\begin{description}
\item[Step 1] We prove that $u$ is well defined and belongs to $\C^{\infty}([t_1,t_2])$. \par
For any $t \in [t_1,t_2]$, $k\in \N^*$, we have, as $\frac{\alpha-1}{2-\alpha}\geq 0$,
\begin{equation*}
\frac{|y^{(k)}(t)|}{(2-\alpha)^{2k}k!\prod_{j=1}^k \left( j + \frac{\alpha-1}{2-\alpha}  \right)}  \leq  \frac{M k!^s}{R^k (2-\alpha)^{2k} k!^2} = \frac{M}{R^k (2-\alpha)^{2k} k!^{2-s}}.
\end{equation*} Hence, the series in (\ref{eq:explicitcontrol}) converges uniformly w.r.t. $t \in [t_1,t_2]$ and $u \in \C^0([t_1,t_2])$. Furthermore, for any $n\in \N^*$, the function $\xi_{n,k}(t):= \frac{y^{(k+n)}(t)}{(2-\alpha)^{2k}k!\prod_{j=1}^k \left( j + \frac{\alpha-1}{2-\alpha}  \right)}$ satisfies  
\begin{equation*}
|\xi_{n,k}(t)| \leq  \frac{M (k+n)!^s}{R^{n+k}(2-\alpha)^{2k} k!^2}, \quad \forall t \in [t_1,t_2], \, k,n \in \N.
\end{equation*} Thus, $\sum_k \xi_{n,k}(t) $ converges uniformly w.r.t $t \in [t_1,t_2]$. Whence, $u \in \C^{\infty}([t_1,t_2])$ and for every $ n \in \N $, $t \in [t_1,t_2]$, $u^{(n)}(t) = \sum_{k=0}^{\infty} \xi_{n,k}(t).$

\item[Step 2] We prove that $u$ is Gevrey of order $s$. \par 

Let $n\in \N$. We deduce from last inequality that 
\begin{eqnarray}
\left|  u^{(n)}(t) \right| & \leq & \sum_{k=0}^{\infty} \frac{M (k+n)!^s}{R^{n+k}(2-\alpha)^{2k} k!^2  }  \nonumber \\
& \leq & M\left[ \sum_{k=0}^{\infty} \frac{1}{(k!)^{2-s} } \left( \frac{2^s}{R(2-\alpha)^2} \right)^k \right] \left( \frac{2^s}{R} \right)^n n!^s , \label{eq:estimationGevrey} 
\end{eqnarray} where we have used (\ref{eq:factorials}). The D'Alembert criterium for entire series shows that, whenever $s \in (0,2)$, the series above converges, which shows that $u \in \G^s([t_1,t_2]).$

\item[Step 3] We show that the function $f$ given by (\ref{eq:explicitsolution}) is well defined and $f \in \C^0([t_1,t_2]\times[0,1]) \cap \C^1((t_1,t_2)\times(0,1))$. \par
Let, for every $k\in \N$,
\begin{equation*}
f_k(t,x) := \frac{y^{(k)}(t) \left(x^{1-\frac{\alpha}{2}}\right)^{2k} }{(2-\alpha)^{2k}k!\prod_{j=1}^k \left( j + \frac{\alpha-1}{2-\alpha}  \right)}, \quad \forall (t,x) \in [t_1,t_2]\times [0,1].
\end{equation*}
Then,
\begin{equation*}
|f_k(t,x)| \leq  \frac{M}{k!^{2-s}} \left( \frac{1}{R(2-\alpha)} \right)^k, \quad \forall (t,x) \in [t_1,t_2]\times[0,1].
\end{equation*}  This proves that $\sum_k f_k$ converges uniformly w.r.t. $(t,x) \in [t_1,t_2]\times [0,1]$. Thus, $f \in \C^0([t_1,t_2]\times[0,1])$. \par
We observe that $\exists k_0=k_0(\alpha)\in \N^*$  such that $\left(2 - \alpha \right) k_0 \geq 1.$ Then, for every $k\geq k_0$, $f_k(t,\cdot) \in \C^1([0,1])$ and
\begin{eqnarray}
\left| \partial_x f_k(t,x) \right| &=& \left| \frac{y^{(k)}(t)2k\left( 1 - \frac{\alpha}{2} \right) x^{-\frac{\alpha}{2}} \left( x^{1 - \frac{\alpha}{2}} \right)^{2k-1} }{(2-\alpha)^{2k} k! \prod_{j=1}^k \left( j + \frac{\alpha -1 }{2 -\alpha} \right)  }  \right| \nonumber \\
& \leq & 2M \left(1 - \frac{\alpha}{2}  \right) \frac{k}{k!^{2-s}} \left( \frac{1}{R(2-\alpha)^2}  \right)^k, \quad \forall x \in [0,1], \nonumber 
\end{eqnarray} since $\left( 1 -\frac{\alpha}{2} \right) (2k-1) - \frac{\alpha}{2} \geq 0$. This proves that $\sum_{k \geq k_0} \partial_x f_k$ converges uniformly w.r.t. $(t,x) \in [t_1,t_2] \times [0,1]$. Thus, $f(t,\cdot) \in \C^1((0,1])$ for every $t\in [t_1,t_2]$. Note that $f$ may not be differentiable w.r.t. $x$ at $x=0$ because of the finite number of terms $\sum_{k=0}^{k_0} \partial_x f_k$. Moreover, $\partial_x f (t,x) = \sum_{k=0}^{\infty} \partial_x f_k(t,x)$ for every $(t,x) \in (t_1,t_2) \times (0,1).$ \par 
A similar argument shows that, for every $x \in (0,1),$ $f(\cdot,x) \in \C^1(t_1,t_2)$ and 
\begin{equation}
\partial_t f(t,x) = \sum_{k=0}^{\infty}\partial_t f_k(t,x), \quad \forall (t,x) \in (t_1,t_2) \times(0,1). 
\label{eq:deriveentemps}
\end{equation}
Finally, since the partial derivatives of $f$ exist and are continuous in $(t_1,t_2)\times(0,1)$, $f \in \C^1((t_1,t_2)\times(0,1))$. \par

\item[Step 4] We show that $f(t,\cdot) \in \C^2_{\alpha}(0,1)$, for every $t\in (t_1,t_2)$. \par 
Let $k_1= k_1(\alpha) \in \N^*$ such that $k_1(2-\alpha) \geq 2.$ Working as in Step 3, we see that $\sum_{k\geq k_1} \partial^2_x f_k$ converges uniformly w.r.t. $(t,x) \in (t_1,t_2) \times (0,1)$. Thus, $f(t,\cdot) \in \C^2(0,1),$ $\forall t \in (t_1,t_2)$. Furthermore, 
\begin{equation}
\partial_x \left( x^{\alpha} \partial_x f \right)(t,x) = \sum_{k=1}^{\infty} \frac{y^{(k)}(t) \left(x^{1-\frac{\alpha}{2}} \right)^{2(k-1)}}{(2-\alpha)^{2(k-1)}(k-1)!\prod_{j=1}^{k-1}\left( j + \frac{\alpha -1}{2-\alpha} \right)}.
\label{eq:deriveesegonde}
\end{equation} for every $(t,x) \in (t_1,t_2) \times(0,1)$. From Step 3, we obtain 
\begin{eqnarray}
\left| x^{\alpha}\partial_x f(t,x) \right| &=& \left| \sum_{k=1}^{\infty} \frac{y^{(k)}(t) 2k\left( 1 - \frac{\alpha}{2} \right) x^{2k\left(1 - \frac{\alpha}{2} \right) +\alpha -1} }{(2-\alpha)^{2k} k! \prod_{j=1}^k \left( j + \frac{\alpha -1 }{2 -\alpha} \right)  }  \right| \nonumber \\
& \leq & 2M \left(1 - \frac{\alpha}{2}  \right) \sum_{k=1}^{\infty} \left[ \frac{k}{k!^{2-s}} \left( \frac{1}{R(2-\alpha)^2}  \right)^k \right]x, \nonumber 
\end{eqnarray} for all $(t,x) \in (t_1,t_2) \times (0,1)$, which implies, since $\alpha \in [1,2)$, that
\begin{equation*}
x^{\alpha} \partial_x f(t,x) \rightarrow 0, \quad \textrm{ as } x \rightarrow 0^+.
\end{equation*} Therefore, $f(t,\cdot) \in \C^2_{\alpha}$, for every $t \in (t_1,t_2)$.

\item[Step 5] According to (\ref{eq:deriveentemps}) and (\ref{eq:deriveesegonde}), an straightforward computation shows that the equation in (\ref{eq:pointwiseCauchypbm}) is satisfied.

\end{description}

\end{proof}

\section{Spectral Analysis}

The goal of this section is to give the explicit expression of the eigenfunctions and eigenvalues of the spectral problem 
\begin{equation}
\left\{ \begin{array}{ll}
A \varphi (x) = \lambda \varphi(x), & x \in (0,1), \\
\left( x^{\alpha} \varphi' \right)|_{x=0} = \varphi(1) = 0, &
\end{array} \right.
\label{eq:spectralproblem}
\end{equation} where $A$ is given by (\ref{eq:definitionofA}). We will make use of several results about Bessel functions recalled in Appendix B. Form now on, we use the notation  
\begin{equation}
\nu := \frac{\alpha-1}{2-\alpha}.
\label{eq:nu}
\end{equation}

\begin{proposition}
Let  
\begin{equation}
\varphi_k(x) = \frac{\sqrt{2-\alpha}}{|J_{\nu + 1}(j_{\nu,k})|} x^{\frac{1-\alpha}{2}} J_{\nu} \left( j_{\nu,k} x^{1 - \frac{\alpha}{2}} \right), \quad \forall x\in (0,1), \, k \in \N^*.
\label{eq:eigenvectors}
\end{equation} Then,
\begin{enumerate}
\item $\varphi_k \in D(A),$ $\forall k \in \N^*$,
\item $\varphi_k$ satisfies (\ref{eq:spectralproblem}) with 
\begin{equation}
\lambda_k := \left( 1 - \frac{\alpha}{2} \right)^2 j_{\nu,k}^2, \quad \forall k\in \N^*,
\label{eq:eigenvalues}
\end{equation} 
\item $\left( \varphi_k \right)_{k \in \N^*}$ is a Hilbert basis of $L^2(0,1)$,
\item for every $f_0 \in L^2(0,1)$ the solution of (\ref{eq:homogeneoussystem}) with $h=0$ writes
\begin{equation}
f(t) = \sum_{k=1}^{\infty} e^{-\lambda_k t} \langle f_0, \varphi_k \rangle \varphi_k \quad \textrm{in } L^2(0,1), \, \forall t\in [0,T].
\label{eq:Fourierexpansion}
\end{equation}
\end{enumerate}
\label{proposition:spectralfamily}
\end{proposition}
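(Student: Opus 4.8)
## Proof Plan for Proposition \ref{proposition:spectralfamily}

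The plan is to reduce the Sturm--Liouville problem \eqref{eq:spectralproblem} to Bessel's equation by the change of variables already foreshadowed in the Section~3 heuristics, namely $z = j\,x^{1-\frac\alpha2}$ together with the Liouville-type substitution $\varphi(x) = x^{\frac{1-\alpha}{2}} w(z)$. First I would verify that $\varphi_k \in D(A)$ for each $k$: this amounts to checking that $\varphi_k \in H^1_\alpha(0,1)$ (integrability of $\varphi_k$ and of $x^{\frac\alpha2}\varphi_k'$ near $x=0$, using the small-argument asymptotics $J_\nu(z) \sim z^\nu$ from Appendix~B) and that $x^\alpha \varphi_k' \in H^1(0,1)$. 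For part~(2), a direct computation should show that if $w$ solves Bessel's equation $z^2 w'' + z w' + (z^2 - \nu^2)w = 0$ then $\varphi(x) = x^{\frac{1-\alpha}{2}} J_\nu(j\,x^{1-\frac\alpha2})$ satisfies $-(x^\alpha \varphi')' = (1-\frac\alpha2)^2 j^2\,\varphi$; one then reads off $\lambda_k = (1-\frac\alpha2)^2 j_{\nu,k}^2$ from \eqref{eq:eigenvalues}. The boundary condition at $x=1$ forces $J_\nu(j) = 0$, which is precisely why $j$ must be a zero $j_{\nu,k}$ of $J_\nu$; the flux condition $(x^\alpha \varphi')|_{x=0}=0$ is handled by the same asymptotics as in the domain check.

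The substantive analytic points are the boundary condition at the degenerate endpoint and the completeness claim. For the condition at $x=0$, I would compute $x^\alpha \varphi_k'(x)$ explicitly and use $J_\nu(z)\sim z^\nu$ and $J_\nu'(z) \sim \frac\nu2\frac{z^{\nu-1}}{\Gamma(\nu+1)}$ to show the flux vanishes in the limit, being careful that $\nu = \frac{\alpha-1}{2-\alpha} \geq 0$ throughout the regime $\alpha \in [1,2)$.

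For part~(3), the Hilbert basis property, the cleanest route is to invoke the standard theory: since $A$ is self-adjoint, positive, with compact resolvent (the embedding $H^1_\alpha(0,1) \hookrightarrow L^2(0,1)$ is compact, a fact available from the functional framework of \cite{CMV5,CMV}), its normalized eigenfunctions form a Hilbert basis of $L^2(0,1)$. It then remains to confirm that the family \eqref{eq:eigenvectors} \emph{exhausts} the eigenfunctions — i.e.\ that every eigenvalue arises from a zero of $J_\nu$ — and that the prefactor $\frac{\sqrt{2-\alpha}}{|J_{\nu+1}(j_{\nu,k})|}$ is exactly the $L^2$-normalization. The normalization reduces, via the change of variables, to the classical integral $\int_0^1 J_\nu(j_{\nu,k}z)^2\,z\,\ud z = \frac12 J_{\nu+1}(j_{\nu,k})^2$, which I would cite from Appendix~B. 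Part~(4) is then immediate: expanding $f_0 = \sum_k \langle f_0,\varphi_k\rangle \varphi_k$ and applying the semigroup $S(t)$ from Proposition~\ref{proposition:Aisselfadjoint} diagonalizes it as $e^{-\lambda_k t}$ on each mode, giving \eqref{eq:Fourierexpansion}.

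The main obstacle I anticipate is neither the Bessel algebra nor the spectral-theorem citation, but the careful endpoint analysis at $x=0$: one must show both that $\varphi_k$ genuinely lies in the weighted domain $D(A)$ and that no spurious eigenfunctions (e.g.\ those built from the second Bessel solution $Y_\nu$, or anomalous values at small $\nu$) satisfy the flux boundary condition. Establishing that the self-adjoint realization of $A$ picks out exactly the $J_\nu$-branch, so that the listed $(\varphi_k)$ are complete rather than merely orthonormal, is where the degeneracy genuinely enters and where I would spend the most care.
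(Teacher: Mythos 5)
Your plan is correct, and for parts (1), (2) and (4) it matches the paper's proof: the paper likewise verifies $\varphi_k\in D(A)$ and $A\varphi_k=\lambda_k\varphi_k$ by direct computation, substituting the Bessel equation (\ref{eq:Besseldifferentialequation}) at $z=j_{\nu,k}x^{1-\frac{\alpha}{2}}$ and handling the endpoint $x=0$ with Lemma \ref{lemma:asymptoticBesselatzero} and (\ref{eq:Besselrecurrencerelation}), and it obtains (4) by citing the standard semigroup expansion. The genuine divergence is in part (3). The paper never invokes compactness of the resolvent or the spectral theorem: after proving orthonormality via (\ref{eq:Besselorthogonality}), it verifies the Parseval identity $\sum_k|a_k|^2=\|f\|_{L^2}^2$ directly, by the change of variables $y=x^{1-\frac{\alpha}{2}}$ which carries $(\varphi_k)$ onto the Fourier--Bessel system $w_n(z)=\frac{\sqrt{2z}}{|J_{\nu+1}(j_{\nu,n})|}J_\nu(j_{\nu,n}z)$, whose completeness in $L^2(0,1)$ is imported wholesale from Lemma \ref{lemma:Besselbase} (Higgins). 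This sidesteps entirely the two extra burdens your route carries: (i) compactness of the embedding $H^1_\alpha(0,1)\hookrightarrow L^2(0,1)$, which indeed holds for $\alpha\in(0,2)$ and is available in \cite{CMV5} but is nowhere stated in this paper; and (ii) the exhaustion argument ruling out the $Y_\nu$-branch, which you rightly identify as the delicate point --- note in particular that for $\alpha\in(1,\frac32)$ the singular solution $x^{\frac{1-\alpha}{2}}Y_\nu\bigl(\mu x^{1-\frac{\alpha}{2}}\bigr)\sim c\,x^{1-\alpha}$ \emph{does} belong to $L^2(0,1)$, so it can only be excluded through the weighted condition $x^{\frac{\alpha}{2}}\varphi'\in L^2$ or the flux $x^\alpha\varphi'\to c(1-\alpha)\neq 0$, exactly as your endpoint analysis anticipates. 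The trade-off: the paper's computation is shorter and self-contained modulo the cited basis lemma, whereas your argument, once the exhaustion step is carried out, yields the stronger conclusion that the spectrum of $A$ is precisely $\{\lambda_k\}_{k\in\N^*}$ with no further eigenfunctions --- a fact the paper neither needs nor establishes.
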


\begin{proof} We will note for simplicity $b_k:= \frac{\sqrt{2-\alpha}}{|J_{\nu+1}(j_{\nu,k})|}$ and $\tilde{\varphi}_k := \frac{1}{b_k} \varphi_k$, for every $ k \in \N^*$. 

\begin{description}

\item[Step 1] We prove that $\varphi_k \in D(A)$, for every $k\in \N^*$ and that $A\varphi_k - \lambda_k\varphi_k =0 $. \par
Let $k\in \N^*$. We observe that $\varphi_k \in \C^{\infty}((0,1]) \cap \C^0([0,1])$, for any $k\in \N^*$ and $x \in (0,1)$. We have
\begin{equation}
\tilde{\varphi}_k'(x) = \frac{1-\alpha}{2} x^{-\frac{1+\alpha}{2}} J_{\nu} (j_{\nu,k}x^{1-\frac{\alpha}{2}}) + j_{\nu,k}\left( 1 - \frac{\alpha}{2}\right)x^{\frac{1}{2}-\alpha} J_{\nu}'(j_{\nu,k}x^{1-\frac{\alpha}{2}}).
\label{eq:firstderivative}
\end{equation} Whence, using (\ref{eq:Besselrecurrencerelation}) and Lemma \ref{lemma:asymptoticBesselatzero}, we deduce
\begin{equation*}
x^{\frac{\alpha}{2}} \tilde{\varphi}_k' =  (1-\alpha) \underset{x\rightarrow 0^+}O\left( x^{\frac{\alpha}{2}-1} \right) + \underset{x\rightarrow 0^+}O\left( x^{1-\frac{\alpha}{2}} \right).
\end{equation*} It follows that $x^{\frac{\alpha}{2}}\varphi_n' \in L^2(0,1)$. Thus $\varphi_k \in H^1_{\alpha}(0,1)$. Moreover, from (\ref{eq:firstderivative}), a direct computation shows
\begin{eqnarray}
\left( x^{\alpha} \tilde{\varphi}_k'\right)' &= & -\left( \frac{1 -\alpha}{2} \right)^2 x^{\frac{\alpha - 3}{2}} J_{\nu}(j_{\nu,k}x^{1 - \frac{\alpha}{2}}) \nonumber \\
&& + \left(1 - \frac{\alpha}{2} \right)^2 j_{\nu,k } x^{-\frac{1}{2}} J_{\nu}'(j_{\nu,k}x^{1-\frac{\alpha}{2}}) \nonumber \\
&& + \left( 1 - \frac{\alpha}{2}\right)^2 j_{\nu,k}^2 x^{\frac{1-\alpha}{2}}  J_{\nu}''(j_{\nu,k}x^{1-\frac{\alpha}{2}}). \label{eq:secondderivative} 
\end{eqnarray}
Then, evaluating equation (\ref{eq:Besseldifferentialequation}) at $z= j_{\nu,k}x^{1-\frac{\alpha}{2}}$ and multiplying by $x^{\frac{\alpha-3}{2}}$, it follows  
\begin{eqnarray}
&& j_{\nu,k}^2 x^{\frac{1-\alpha}{2}}J_{\nu}''(j_{\nu,k}x^{1-\frac{\alpha}{2}})  \nonumber \\
&& \quad = - j_{\nu,k} x^{-\frac{1}{2}} J_{\nu}'(j_{\nu,k}x^{1-\frac{\alpha}{2}}) - j_{\nu,k}^2 x^{\frac{1-\alpha}{2}}J_{\nu}(j_{\nu,k}x^{1-\frac{\alpha}{2}}) \nonumber \\
&& \quad \quad + \left( \frac{\alpha -1}{2-\alpha} \right)^2 x^{\frac{\alpha -3}{2}} J_{\nu}(j_{\nu,k} x^{1-\frac{\alpha}{2}}). \nonumber
\end{eqnarray} Substituting in (\ref{eq:secondderivative}), this gives
\begin{equation*}
-\left( x^{\alpha} \tilde{\varphi}_k'\right)' =  \left(1 - \frac{\alpha}{2} \right)^2 j_{\nu,k}^2 x^{\frac{1-\alpha}{2}} J_{\nu}(j_{\nu,k}x^{1-\frac{\alpha}{2}}) = \lambda_k \tilde{\varphi}_k. 
\end{equation*} Then, we readily have $\left( x^{\alpha} \tilde{\varphi}_k'\right)' \in H^1_{\alpha}(0,1) \subset L^2(0,1)$. Thus, $\varphi_k \in D(A)$. Moreover, $A\varphi_k = \lambda_k \varphi_k$.

\item[Step 2] We check the boundary condition of (\ref{eq:spectralproblem}) at $x =0$. \par
 We observe first that the case $\alpha=1$ is straightforward. From (\ref{eq:firstderivative}), (\ref{eq:Besselrecurrencerelation}) and Lemma \ref{lemma:asymptoticBesselatzero}, we have 
\begin{equation*}
|x^{\alpha} \tilde{\varphi}_n'(x)| = \underset{x\rightarrow 0^+}O\left( x^{\alpha -1} \right). 
\end{equation*} Then, it follows that $\lim_{x\rightarrow 0^{+}} x^{\alpha} \tilde{\varphi}_n'(x) = 0$. This shows, combined with Step 1, that $\varphi_k$ satisfies (\ref{eq:spectralproblem}).

\item[Step 3] We prove that $\left( \varphi_k \right)_{k\in \N^*}$ is an orthonormal family in $L^2(0,1)$. \par
Let $n,m \in \N^*$. Then, changing variables and using (\ref{eq:Besselorthogonality}), we get

\begin{eqnarray}
&& \int_0^1 \varphi_n(x) \varphi_m(x) \ud x  \nonumber \\
&& = (2-\alpha) \int_0^1 x^{1-\alpha} \frac{J_{\nu}(j_{\nu,n} x^{1- \frac{\alpha}{2}}) }{|J_{\nu+1}(j_{\nu,n})|} \frac{J_{\nu}(j_{\nu,m} x^{1- \frac{\alpha}{2}})}{|J_{\nu+1}(j_{\nu,m})|} \ud x \nonumber \\
&& = \frac{2}{|J_{\nu+1}(j_{\nu,n})||J_{\nu+1}(j_{\nu,m})|} \int_0^1 y  J_{\nu}(j_{\nu,n} y)J_{\nu}(j_{\nu,m} y) \ud y = \delta_{n,m}, \nonumber 
\end{eqnarray} where $\delta_{n,m}$ stands for the Kronecker delta.

\item[Step 4] We prove that $\left(\varphi_k\right)_{k \in \N^*}$ is a Hilbert basis of $L^2(0,1)$ by checking the Bessel equality. Let $f \in L^2(0,1)$ and let 
\begin{equation}
a_k := \int_0^1 f(x) \varphi_k(x) \ud x, \quad \forall k \in \N^*.
\label{eq:Fouriercoefficients}
\end{equation} Then, using Lemma \ref{lemma:Besselbase} and changing variables twice, we get
\begin{eqnarray}
\sum_{k=1}^{\infty} |a_k|^2 &=& \sum_{k=1}^{\infty} \left| \int_0^1 f(x) \frac{\sqrt{2-\alpha}}{|J_{\nu+1}(j_{\nu,k})|}x^{\frac{1-\alpha}{2}} J_{\nu}\left(j_{\nu,k}x^{1 - \frac{\alpha}{2}}\right) \ud x \right|^2 \nonumber \\
& = & \frac{2}{2-\alpha} \sum_{k=1}^{\infty} \left| \int_0^1 y^{\frac{\alpha-1}{2-\alpha} + \frac{1}{2}} f(y^{\frac{2}{2-\alpha}}) \frac{\sqrt{2y}}{|J_{\nu+1}(j_{\nu,k})|} J_{\nu}(j_{\nu,k}y) \ud y \right| ^2 \nonumber \\
& = & \frac{2}{2-\alpha} \int_0^1 y^{\frac{2(\alpha -1)}{2-\alpha} +1 } \left| f(y^{\frac{2}{2-\alpha}})\right|^2 \ud y \nonumber \\
& = & \int_0^1 |f(z)|^2 \ud z = \|f\|_{L^2(0,1)}^2. \nonumber 
\end{eqnarray} 

\item[Step 5] Finally, (\ref{eq:Fourierexpansion}) is a consequence of \cite[Theorem 8.2.3, pp.237--240]{Allaire}.
\end{description}
\end{proof}

\section{Regularising effect}
We use the orthonormal basis obtained in Proposition \ref{proposition:spectralfamily} and some properties of Bessel functions to quantify the smoothing of the solution of system (\ref{eq:Cauchypbm}) when $u\equiv 0$. \par

\begin{proposition} Let $f_0 \in L^2(0,1)$, $T>0$ and let $f\in \C^0([0,T];L^2(0,1))$ be the unique weak solution of system (\ref{eq:homogeneoussystem}) when $h=0$, according to Proposition \ref{proposition:Aisselfadjoint}. Then, there exists $Y \in \C^{\infty}((0,T])$ such that for every $\sigma \in (0,T)$,
\begin{equation*}
Y \in \G^1([\sigma,T])
\end{equation*} and 
\begin{equation}
f(t,x) = \sum_{n=0}^{\infty} \frac{Y^{(n)}(t)  \left( x^{1- \frac{\alpha}{2}} \right)^{2n}}{(2-\alpha)^{2n} n! \prod_{j=1}^n \left( j + \frac{\alpha - 1}{2 - \alpha} \right)}, \quad \forall (t,x) \in [\sigma,T]\times[0,1]. 
\label{eq:trueexplicitsolution}
\end{equation} Moreover, $f$ solves system (\ref{eq:pointwiseCauchypbm}) pointwisely (see Definition \ref{def:pointwise}) in $(\sigma,T)\times(0,1)$ with $u =0$ and initial datum $f_{\sigma}(x) = f(\sigma,x)$.  
\label{proposition:regularisingeffect}
\end{proposition}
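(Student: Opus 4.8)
The plan is to start from the spectral representation (\ref{eq:Fourierexpansion}) of the free solution and to recognise, via the power-series of the Bessel function $J_\nu$, that each eigenfunction $\varphi_k$ is already of the flatness form in the variable $X:=x^{1-\frac\alpha2}$. Writing $a_k:=\langle f_0,\varphi_k\rangle$ and recalling $b_k=\sqrt{2-\alpha}/|J_{\nu+1}(j_{\nu,k})|$, the identity $J_\nu(z)=\sum_{m\ge0}\frac{(-1)^m (z/2)^{2m+\nu}}{m!\,\Gamma(m+\nu+1)}$ gives, after the cancellation $x^{\frac{1-\alpha}{2}}X^{\nu}=1$,
\[
\varphi_k(x)=b_k\sum_{m=0}^\infty \frac{(-1)^m (j_{\nu,k}/2)^{2m+\nu}}{m!\,\Gamma(m+\nu+1)}\,X^{2m},
\]
so that in particular $\varphi_k(0)=b_k(j_{\nu,k}/2)^\nu/\Gamma(\nu+1)$. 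I would then define the candidate flat output by the spectral series
\[
Y(t):=\sum_{k=1}^\infty c_k\, e^{-\lambda_k t},\qquad c_k:=\frac{a_k\,b_k\,(j_{\nu,k}/2)^\nu}{\Gamma(\nu+1)},
\]
which is nothing but the value $f(t,0)$ (the $n=0$ term of the target series).

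The first technical step is to bound the coefficients $c_k$. Bessel's inequality gives $|a_k|\le \|f_0\|_{L^2}$, and the asymptotics recalled in Appendix B, namely $j_{\nu,k}\sim k\pi$ and $|J_{\nu+1}(j_{\nu,k})|\sim \sqrt{2/(\pi j_{\nu,k})}$ as $k\to\infty$, yield $b_k(j_{\nu,k})^\nu\le C k^{\frac12+\nu}$, hence the polynomial bound $|c_k|\le C\|f_0\|_{L^2}\,k^{\frac12+\nu}$. Since $\lambda_k\sim\big(\tfrac{2-\alpha}{2}\big)^2(k\pi)^2$ grows quadratically, the factor $e^{-\lambda_k t}$ dominates any power of $k$ for $t\ge\sigma>0$; differentiating term by term, the series $Y^{(n)}(t)=\sum_k c_k(-\lambda_k)^n e^{-\lambda_k t}$ converge uniformly on $[\sigma,T]$, so $Y\in\C^\infty((0,T])$. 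For the Gevrey-$1$ bound I would use the elementary estimate $\sup_{\lambda>0}\lambda^n e^{-\lambda\sigma/2}=(2n/(e\sigma))^n$ together with $n^n\le e^n n!$, which gives
\[
|Y^{(n)}(t)|\le \Big(\tfrac{2}{e\sigma}\Big)^n n^n\sum_k|c_k|e^{-\lambda_k\sigma/2}\le M\,\frac{n!}{(\sigma/2)^n},\qquad t\in[\sigma,T],
\]
so that $Y\in\G^1([\sigma,T])$ for every $\sigma\in(0,T)$.

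It remains to identify the flatness series built on $Y$ with $f$. Inserting $Y^{(n)}(t)=\sum_k c_k(-\lambda_k)^n e^{-\lambda_k t}$ into (\ref{eq:trueexplicitsolution}) and exchanging the two summations, the inner sum over $n$ becomes, using $\prod_{j=1}^n(j+\nu)=\Gamma(n+\nu+1)/\Gamma(\nu+1)$ and $\lambda_k/(2-\alpha)^2=j_{\nu,k}^2/4$, exactly $\Gamma(\nu+1)(j_{\nu,k}X/2)^{-\nu}J_\nu(j_{\nu,k}X)$. Undoing the definition of $c_k$ and using $X^{-\nu}=x^{\frac{1-\alpha}{2}}$ recovers $b_k x^{\frac{1-\alpha}{2}}J_\nu(j_{\nu,k}x^{1-\frac\alpha2})=\varphi_k(x)$, so that the series collapses to $\sum_k a_k e^{-\lambda_k t}\varphi_k(x)=f(t,x)$ by (\ref{eq:Fourierexpansion}). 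Since $Y\in\G^1\subset\G^s$ for any $s\in(1,2)$, Proposition \ref{proposition:convergence} then guarantees that (\ref{eq:trueexplicitsolution}) is a pointwise solution; evaluating at $x=1$ one has $\varphi_k(1)=0$, so the associated control vanishes and the datum at $t=\sigma$ is $f(\sigma,\cdot)$, giving the ``moreover'' part. I expect the main obstacle to be the rigorous justification of the double-sum interchange, which must be underpinned by the polynomial bound on $c_k$ against the quadratic growth of $\lambda_k$ to secure absolute convergence on $[\sigma,T]\times[0,1]$, together with the Gevrey estimate with explicit constants; the pleasant point is that the Bessel power series makes the reconstruction fold back exactly onto the eigenfunctions.
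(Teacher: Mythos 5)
Your proposal is correct, and it constructs exactly the paper's flat output: your $c_k$ coincide with the coefficients in the paper's definition of $Y$ in (\ref{eq:flatoutput}) (note $\Gamma\bigl(\tfrac{1}{2-\alpha}\bigr)=\Gamma(\nu+1)$), and your identification of the flatness series with $\sum_k a_k e^{-\lambda_k t}\varphi_k$ through the Bessel power series (\ref{eq:Besselseries}) is the paper's computation read in the reverse direction. Where you genuinely diverge is in the two technical steps. For the double-sum interchange, the paper fixes $n$ and sums over $k$ first, which forces the delicate estimate of $\sum_k h_n^{\alpha}(j_{\nu,k})$ with $h_n^{\alpha}(x)=e^{-(1-\frac{\alpha}{2})^2x^2\sigma}x^{2n+\nu+\frac12}$: a comparison with $\int_0^\infty h_n^{\alpha}$ using the zero spacing (\ref{eq:convergingzeros}) and monotonicity, the Gamma integral (\ref{eq:Gammafunction}), Stirling (Lemma \ref{lemma:Stirling}), and finally d'Alembert in $n$. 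Your order --- fix $k$, sum over $n$ --- is substantially lighter: since $\prod_{j=1}^n(j+\nu)\ge n!$ for $\nu\ge 0$, the absolutely summed inner series is bounded by $\sum_n (j_{\nu,k}X/2)^{2n}/(n!)^2\le e^{j_{\nu,k}X}$, a modified-Bessel-type bound whose linear exponent is crushed by the Gaussian factor $e^{-\lambda_k\sigma}$, after which summation over $k$ is immediate. One nuance: the reason you state (``polynomial bound on $c_k$ against the quadratic growth of $\lambda_k$'') is not by itself what closes the interchange --- the decisive point is precisely this $e^{j_{\nu,k}X}$ bound on the inner sum, the polynomial size of $c_k$ being incidental --- so when you write this up, that inner-sum estimate is the step to make explicit. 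Second, for the Gevrey regularity the paper merely asserts that $Y$ is analytic on $[\sigma,T]$, whereas your quantitative argument via $\sup_{\lambda>0}\lambda^n e^{-\lambda\sigma/2}=(2n/(e\sigma))^n$ and $n^n\le e^n n!$ produces the explicit bound $M\,n!\,(2/\sigma)^n$, which is more informative (and note $s=1$ is itself admissible in Proposition \ref{proposition:convergence}, so the detour through $\G^1\subset\G^s$, $s\in(1,2)$, is unnecessary). The ``moreover'' part is handled identically in both proofs, via Proposition \ref{proposition:convergence} with $t_1=\sigma$, $t_2=T$; your observation that the associated control vanishes because $\varphi_k(1)=0$ makes explicit a point the paper leaves implicit.
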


\begin{proof}
Let $\nu$ be given by (\ref{eq:nu}) and $a_k$ as in (\ref{eq:Fouriercoefficients}). Let $\sigma \in (0,T)$ be fixed but arbitrary. Let $t\in[\sigma,T$] be fixed. By (\ref{eq:Fourierexpansion}) and (\ref{eq:Besselseries}), we have, for a.e. $x\in [0,1]$,
\begin{eqnarray}
f(t,x) &=& \sum_{k=1}^{\infty} e^{-\lambda_k t} \frac{a_k \sqrt{2-\alpha}}{|J_{\nu +1}(j_{\nu,k})|} x^{\frac{1-\alpha}{2}} J_{\nu}\left( j_{\nu,n} x^{1-\frac{\alpha}{2}} \right) \nonumber \\
& = & \sum_{k=1}^{\infty} e^{-\lambda_k t} \frac{a_k \sqrt{2-\alpha} }{|J_{\nu +1}(j_{\nu,k})|}x^{\frac{1-\alpha}{2}} \sum_{n=0}^{\infty} \frac{(-1)^n}{n! \Gamma\left(n + 1 + \nu \right)}\left( \frac{j_{\nu,k} x^{1 - \frac{\alpha}{2}}}{2} \right)^{2n + \nu} \nonumber \\
& = & \sum_{k=1}^{\infty} \sum_{n=0}^{\infty} B_{n,k}(t,x), \label{eq:Fubini} 
\end{eqnarray} where, for every $(n,k)\in \N \times \N^*$,
\begin{equation*}
B_{n,k}(t,x) := e^{-\lambda_k t} b_k \frac{(-1)^n j_{\nu,k}^{2n + \nu}}{n! \Gamma (n +1 + \nu) 2^{2n + \nu}} \frac{\left( x^{1 - \frac{\alpha}{2}} \right)^{2n}}{|J_{\nu+1}(j_{\nu,k})|},
\end{equation*} and $b_k:=a_k\sqrt{2-\alpha}$, $\forall k\in \N^*$. 
\begin{description}

\item[Step 1] We show that 
\begin{equation}
\sum_{n=0}^{\infty} \left( \sum_{k=1}^{\infty} |B_{n,k}(t,x)| \right) < \infty, \quad \forall x \in [0,1].
\label{eq:convergencevaleurabsolue}
\end{equation} Indeed, since $\lambda_k>0$, we have for every $(n,k) \in \N \times \N^*$ and $x \in [0,1]$,
\begin{eqnarray}
\left|B_{n,k}(t,x)\right| & \leq & \frac{|b_k|j_{\nu,k}^{2n + \nu} e^{-\lambda_k \sigma}}{2^{2n + \nu}n! \Gamma (n +1 + \nu)|J_{\nu +1}(j_{\nu,k})|}  \nonumber \\
& \leq &  \frac{C_1 |b_k| e^{-\lambda_k \sigma} j_{\nu,k}^{2n + \nu + \frac{1}{2}}}{2^{2n}n! \Gamma (n +1 + \nu)} , \label{eq:majorationB}
\end{eqnarray} for a constant $C_1 >0$, using Lemma \ref{lemma:asymptoticBesselaux}. \par
We fix $n\in \N$ and we define the function $h_n^{\alpha} \in \C^{\infty}(\R^+;\R^+)$ by
\begin{equation*}
h_n^{\alpha}(x) := e^{-\left( 1 - \frac{\alpha}{2}\right)^2 x^2 \sigma} x^{2n + \nu + \frac{1}{2}}, \quad \forall x \in [0, +\infty), 
\end{equation*} which satisfies that
\begin{equation}
\frac{\ud }{\ud x}h_n^{\alpha}(x) >0, \, \forall x\in (0,N_n^{\alpha}) \quad \textrm{and} \quad \frac{\ud }{\ud x} h_n^{\alpha}(x) <0, \, \forall x \in (N_n^{\alpha}, \infty), 
\label{eq:monotonicityofh}
\end{equation} where $N_n^{\alpha} := \frac{2}{2-\alpha}\sqrt{\frac{1}{\sigma}\left( n + \frac{\alpha}{4(2-\alpha)}\right)}$. Hence, from (\ref{eq:majorationB}) and (\ref{eq:eigenvalues}), 
\begin{equation}
\sum_{k=1}^{\infty} | B_{n,k}(t,x) |\leq  \frac{C_1 \sup_{k}|b_k|}{2^{2n} n! \Gamma (n + 1 +\nu) } \sum_{k=1}^{\infty} h_n^{\alpha}(j_{\nu,k}) 
\label{eq:seriesBnk}
\end{equation} Introducing $K_n^{\alpha} := \sup \left\{ k \in \N^*; j_{\nu,k} \leq N_n^{\alpha} \right\}$, we write 
\begin{equation}
\sum_{k=1}^{\infty} h_n^{\alpha} (j_{\nu,k}) = h_n^{\alpha}(j_{\nu,K_n^{\alpha}}) + h_n^{\alpha} (j_{\nu,K_n^{\alpha}+1}) +\sum_{k \in \N^* - \left\{ K_n^{\alpha}, K_n^{\alpha}+1 \right\}} h_n^{\alpha}(j_{\nu,k})
\label{eq:decoupage}
\end{equation} On one hand, we have
\begin{eqnarray}
&& h_n^{\alpha}(j_{\nu,K_n^{\alpha}}) + h_n^{\alpha}(j_{\nu,K_n^{\alpha}+1}) \leq 2 h_n^{\alpha}(N_n^{\alpha})\nonumber \\
&& \quad \quad \leq 2e^{-\left( n + \frac{\alpha}{4(2-\alpha)}\right)} \left( n + \frac{\alpha}{4(2-\alpha)}\right)^{n + \frac{\alpha}{4(2-\alpha)}}\left[\frac{1}{\sigma}\left(\frac{2}{2-\alpha} \right)^2 \right]^{n + \frac{\alpha}{4(2-\alpha)}} \nonumber \\
&& \quad \quad \leq C_2 \Gamma \left( n + \frac{\alpha}{4(2-\alpha)} + \frac{1}{2} \right) \left[\frac{1}{\sigma}\left(\frac{2}{2-\alpha} \right)^2 \right]^{n + \frac{\alpha}{4(2-\alpha)}}, \label{eq:majorationpremierterme} 
\end{eqnarray} for a constant $C_2>0$, using Lemma \ref{lemma:Stirling} with $a =1, b=\frac{1}{2}$. On the other hand, using (\ref{eq:monotonicityofh}), we write
\begin{eqnarray}
&& \sum_{k \in \N^*-\left\{ K_n^{\alpha}, K_n^{\alpha} + 1 \right\}} h_n^{\alpha}(j_{\nu,k}) \leq \nonumber \\
&& \leq \sum_{k=1}^{K_n^{\alpha}-1} \frac{1}{j_{\nu,k+1} - j_{\nu,k}} \int_{j_{\nu,k}}^{j_{\nu,k+1}} h_n^{\alpha}(x) \ud x + \sum_{K_n^{\alpha}+1}^{\infty} \frac{1}{j_{\nu,k} - j_{\nu,k-1}} \int_{j_{\nu,k-1}}^{j_{\nu,k}}h_n^{\alpha}(x) \ud x \nonumber \\
&& \leq \sup_{k\in \N^*} \left\{ \frac{1}{j_{\nu,k+1} - j_{\nu,k}} \right\} \left( \int_{j_{\nu,1}}^{j_{\nu,K_n^{\alpha}}} h_n^{\alpha}(x) \ud x + \int_{j_{\nu,K_n+1}^{\alpha}}^{\infty} h_n^{\alpha}(x) \ud x \right) \nonumber \\
&& \leq C_3 \int_0^{\infty} h_n^{\alpha} (x) \ud x, \nonumber  
\end{eqnarray} for a constant $C_3>0 $, using (\ref{eq:convergingzeros}). Moreover, we have  
\begin{eqnarray}
\int_0^{\infty} h_n^{\alpha}(x) \ud x & = & \int_0^{\infty} e^{- \left( 1 - \frac{\alpha}{2}\right)^2x^2 \sigma} x^{2n + \frac{\alpha}{2(2-\alpha)}} \ud x \nonumber \\
& = & \int_0^{\infty} e^{-t} \left( \frac{2}{2-\alpha} \sqrt{\frac{t}{\sigma}}\right)^{2n + \frac{\alpha}{2(2-\alpha)}} \frac{1}{2\sqrt{\sigma t}}\left( \frac{2}{2-\alpha} \right) \ud t \nonumber \\
& = & \frac{1}{2} \left[ \frac{1}{\sqrt{\sigma}} \left( \frac{2}{2-\alpha} \right) \right]^{2n + \frac{\alpha}{2(2-\alpha)} + 1} \int_0^{\infty}e^{-t}t^{n+ \frac{\alpha}{4(2-\alpha)} - \frac{1}{2}} \ud t \nonumber \\
& = & \frac{1}{2} \left[ \frac{1}{\sqrt{\sigma}} \left( \frac{2}{2-\alpha} \right) \right]^{2n + \frac{\alpha}{2(2-\alpha)} + 1} \Gamma \left( n + \frac{\alpha}{4(2-\alpha)} + \frac{1}{2} \right),   \nonumber
\end{eqnarray} where we have used (\ref{eq:Gammafunction}) with $p=n + \frac{\alpha}{4(2-\alpha)} + \frac{1}{2}$. Hence, combining this with (\ref{eq:decoupage}) and (\ref{eq:majorationpremierterme}), we get
\begin{equation*}
\sum_{k=1}^{\infty} h_n^{\alpha}(j_{\nu,k}) \leq \left( C_2 + \frac{C_3}{\sqrt{\sigma}(2-\alpha)} \right) \left[ \frac{1}{\sqrt{\sigma}} \left( \frac{2}{2-\alpha} \right) \right]^{2n + \frac{\alpha}{2(2-\alpha)}} \Gamma \left( n + \frac{\alpha}{4(2-\alpha)} + \frac{1}{2} \right),
\end{equation*} which, according to (\ref{eq:seriesBnk}), implies
\begin{equation*}
\sum_{k=1}^{\infty}|B_{n,k}(t,x)| \leq  C_4\left[ \frac{1}{\sqrt{\sigma}} \left( \frac{2}{2-\alpha} \right) \right]^{2n + \frac{\alpha}{2(2-\alpha)}} \frac{\Gamma \left( n + \frac{\alpha}{4(2-\alpha)} + \frac{1}{2}\right)}{2^{2n}n! \Gamma \left( n + \nu +1 \right)}.
\end{equation*} Henceforth, the D'Alembert criterium for entire series gives (\ref{eq:convergencevaleurabsolue}). \par
\bigskip

\item[Step 2] We find $Y \in \G^1([\sigma,T])$  such that (\ref{eq:trueexplicitsolution}) holds. \par 
Thanks to Fubini's theorem, (\ref{eq:Fubini}) and (\ref{eq:Gammafunction2}), we may write 
\begin{equation*}
f(t,x) = \sum_{n=0}^{\infty} \frac{ y_n(t) \left( x^{1 - \frac{\alpha}{2}} \right)^{2n}}{(2-\alpha)^{2n} n! \prod_{j=1}^n (j + \nu)},
\end{equation*} where, for every $n\in \N$, 
\begin{equation*}
y_n(t) := \frac{(-1)^n \sqrt{2-\alpha} \left(1 - \frac{\alpha}{2}  \right)^{2n}}{2^{\nu}\Gamma\left( \frac{1}{2-\alpha} \right)} \sum_{k=1}^{\infty}a_k e^{-\lambda_k t} \frac{j_{\nu,k}^{2n + \nu}}{|J_{\nu +1}(j_{\nu,k})|}, \quad \forall t \in [\sigma,T],
\end{equation*} and $\nu$ is given by (\ref{eq:nu}). Putting
\begin{equation}
Y(t) := \frac{\sqrt{2-\alpha}}{2^{\nu}\Gamma\left( \frac{1}{2-\alpha}\right)} \sum_{k=1}^{\infty} \frac{a_k j_{\nu,k}^{\nu}}{|J_{\nu +1}(j_{\nu,k})|}e^{-\left( 1 - \frac{\alpha}{2} \right)^2 j_{\nu,k}^2 t}, \quad t \in [\sigma, T],
\label{eq:flatoutput}
\end{equation} we have that, since $\sigma >0$, $Y$ is analytic in $[\sigma,T]$. Moreover, 
\begin{equation*}
Y^{(n)}(t) = y_n(t), \quad \forall t \in [\sigma,T] , \, \forall n\in \N.
\end{equation*} Hence, we obtain (\ref{eq:trueexplicitsolution}) with this choice. Since $\sigma \in (0,T)$ is arbitrary, we have in addition that $Y \in \C^{\infty}((0,T])$.\par
Furthermore, applying Proposition \ref{proposition:convergence} to (\ref{eq:trueexplicitsolution}) with $t_1=\sigma$ and $t_2 = T$, we deduce that $f$ solves (\ref{eq:Cauchypbm}) pointwisely in $(\sigma,T) \times (0,1)$ with $u =0$ and $f_{\sigma}(x) = f(\sigma,x)$.

\end{description}
\end{proof}

\section{Construction of the control}

Let $s\in \R$ with $s>1$. The function (see \cite[Section 2]{MRR} and \cite[Theorem 11.2, p.48]{Widder})
\begin{equation}
\phi_s(t) := \left\{ \begin{array}{ll}
1, & \textrm{ if } t \leq 0, \\
\frac{e^{-(1-t)^{-\frac{1}{s-1}}}}{e^{-(1-t)^{-\frac{1}{s-1}}} + e^{-t^{-\frac{1}{s-1}}}}, & \textrm{ if } 0< t <1, \\
0, & \textrm{ if } t \geq 1,
\end{array} \right.
\end{equation} belongs to $\G^s([0,1])$ and satisfies
\begin{equation}
\phi_s(0)=1,\, \phi_s(1) = 0, \quad \phi_s^{(i)}(0)=\phi_s^{(i)}(1) = 0,\, \forall i \in \N^*. 
\label{eq:propertiesofphis}
\end{equation}

\begin{proof}[Proof of Theorem \ref{thm:null-controllability}]
Let $f_0 \in L^2(0,1)$, $T>0$. Let $f$ and  $Y$ be given by Proposition \ref{proposition:regularisingeffect}.  \par 
We pick $\tau \in (0, T)$, $s \in (1,2)$ and we set the flat output
\begin{equation*}
y(t) := \phi_s \left( \frac{t - \tau}{T-\tau} \right) Y(t), \quad \forall t\in (0, T], 
\end{equation*} which belongs to $\C^{\infty}(0,T)$. Moreover, for every $\sigma \in (0,T)$,  $y \in \G^s([\sigma,T])$, as it is a product of two functions in $\G^s([\sigma,T])$. We define accordingly the function 
\begin{equation*}
\tilde{f}(t,x) := \sum_{k=1}^{\infty} \frac{y^{(n)}(t) \left( x^{1-\frac{\alpha}{2}} \right)^{2n}}{(2-\alpha)^{2n} n! \prod_{j=1}^n \left( j + \frac{\alpha -1}{2 -\alpha} \right)}, \quad \forall (t,x) \in (0, T] \times [0,1],
\end{equation*} and the control
\begin{equation}
u(t) = \left\{ \begin{array}{ll}
0, & t\in [0, \tau], \\
\tilde{f}(t,1), & t \in (\tau, T].
\end{array} \right.
\label{eq:vraicontrole2}
\end{equation} Since $y \in \G^s([\sigma, T])$ for some $s\in (1,2)$, Proposition \ref{proposition:convergence} shows that 
\begin{equation}
\begin{array}{cc}
\forall \sigma \in (0,T),\, \tilde{f} \textrm{ is the pointwise solution of (\ref{eq:pointwiseCauchypbm}) with } \\
 t_1 = \sigma, \, t_2 =T, \, f_{t_1} = f(\sigma,\cdot) \textrm{ and } (\ref{eq:vraicontrole2}).
\end{array}
\label{eq:ftildepointwise}
\end{equation} As a consequence of (\ref{eq:propertiesofphis}), we have
\begin{eqnarray}
y(t) &=& Y(t), \,  \forall t\in (0,\tau], \nonumber \\
y(T) &=& 0. \label{eq:yequalszeroatT}
\end{eqnarray} Whence, $\tilde{f}(t,x) = f(t,x),$ for every $(t,x) \in (0,\tau) \times (0,1)$. Thus, as $f \in \C^0([0,T];L^2(0,1))$, we deduce
\begin{align}
\tilde{f} \in \C^0([0,T];L^2(0,1)), \label{eq:ftildecontinuous} \\
\tilde{f}(0) = f_0 \quad \textrm{in } L^2(0,1). 
\label{eq:tildeinitial}
\end{align} We have to check that $\tilde{f}$ is the weak solution of system (\ref{eq:Cauchypbm}) on $(0,T)$. To do so, and according to Definition \ref{def:weak}, let $t'\in (0,T)$ and let $\psi$ satisfying (\ref{eq:regularitetest}) and (\ref{eq:boundarytest}). Then, by (\ref{eq:ftildepointwise}) and since a pointwise solution is a weak solution (see Remark \ref{remark:uniqueness}), we have, for every $\sigma >0$,
\begin{gather}
\int_{\sigma}^{t'} \int_0^1 \tilde{f}(t,x) \left(\partial_t \psi + \partial_x (x^{\alpha} \partial_x \psi ) \right)(t,x) \ud t \ud x \nonumber \\
= \int_0^1 \tilde{f}(t',x)\psi(t',x) \ud x - \int_0^1 \tilde{f}(\sigma,x) \psi(\sigma,x) \ud x + \int_{\sigma}^{t'} u(t) \left(x^{\alpha}\partial_x \psi\right)(t,1) \ud t.  \nonumber 
\end{gather} Then, from (\ref{eq:vraicontrole2}), (\ref{eq:ftildecontinuous}), (\ref{eq:tildeinitial}) and (\ref{eq:regularitetest}), taking $\sigma \rightarrow 0^+$, we get the conclusion. \par
Finally, by construction (\ref{eq:yequalszeroatT}) implies that $\tilde{f}(T,x) = 0$, for every $x \in (0,1)$.

\end{proof}

\subsection*{Acknowledgements}
I thank Karine Beauchard for suggesting me this problem and for many fruitful discussions.


\begin{appendix}

\section{Some properties of the Gamma function}
For any $p\in \R^+$, the Gamma function is defined (see \cite[6.1.1, p.254]{AS}) by 
\begin{equation}
\Gamma(p) := \int_0^{\infty} e^{-t}t^{p-1} \ud t,
\label{eq:Gammafunction}
\end{equation} which is a monotone increasing function on $(0,\infty)$. Furthermore, (see \cite[6.1.15, p.256]{AS})
\begin{equation}
\Gamma(x+1) = x \Gamma(x), \,\,\, \forall x \in (0,\infty).
\label{eq:Gammafunction2}
\end{equation} We have the following asymptotics of the Gamma function.

\begin{lemma}[\cite{AS}, 6.1.39 ]
Let $a\in \R^+$ and $b\in \R$. Then,
\begin{equation}
\Gamma(ax + b) \underset{x \rightarrow \infty}{\sim} \sqrt{2 \pi}e^{-ax}(ax)^{ax +b -\frac{1}{2}}. 
\end{equation}  
\label{lemma:Stirling}
\end{lemma}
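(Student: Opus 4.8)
The plan is to reduce the claim to the classical Stirling asymptotic $\Gamma(z) \sim \sqrt{2\pi}\, z^{z-\frac12} e^{-z}$ as $z \to \infty$, and then to absorb the shift $b$ and the rescaling by $a$ into a correction factor that tends to $1$. First I would set $z = ax + b$; since $a > 0$ we have $z \to \infty$ as $x \to \infty$, so the unshifted Stirling formula gives
\[ \Gamma(ax+b) \underset{x\to\infty}{\sim} \sqrt{2\pi}\,(ax+b)^{ax+b-\frac12}\, e^{-(ax+b)}. \]

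Next I would compare this expression with the target $\sqrt{2\pi}\, e^{-ax}(ax)^{ax+b-\frac12}$ by forming the ratio
\[ \frac{(ax+b)^{ax+b-\frac12}\, e^{-(ax+b)}}{(ax)^{ax+b-\frac12}\, e^{-ax}} = \left(1 + \frac{b}{ax}\right)^{ax+b-\frac12} e^{-b}. \]
The key step is to show that this ratio converges to $1$. Taking logarithms and using the expansion $\log\!\left(1 + \frac{b}{ax}\right) = \frac{b}{ax} + O(x^{-2})$, the exponent $\left(ax + b - \frac12\right)\log\!\left(1 + \frac{b}{ax}\right)$ tends to $b$: the leading contribution is $ax\cdot\frac{b}{ax} = b$, while every remaining term is of order $x^{-1}$ and hence negligible. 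Consequently $\left(1 + \frac{b}{ax}\right)^{ax+b-\frac12} \to e^{b}$, so the whole ratio tends to $e^{b}\cdot e^{-b} = 1$.

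Combining the two displays by transitivity of asymptotic equivalence yields the stated formula. The computation is routine and I do not anticipate any genuine obstacle; the only point needing mild care is the generalized limit $\left(1+\frac{b}{ax}\right)^{ax}\to e^{b}$, which the logarithm handles cleanly. Since the statement is quoted verbatim from Abramowitz--Stegun, one could equally well just cite it, but the argument above furnishes a self-contained derivation from the standard Stirling asymptotic.
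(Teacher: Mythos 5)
Your derivation is correct. The paper itself gives no proof of this lemma: it is quoted verbatim from Abramowitz--Stegun \cite[6.1.39]{AS} as a known asymptotic, so there is no internal argument to compare against. Your reduction to the unshifted Stirling formula is exactly the standard way to obtain the shifted/rescaled version, and the key computation checks out: writing the ratio as $\left(1+\frac{b}{ax}\right)^{ax+b-\frac12}e^{-b}$ and expanding $\log\left(1+\frac{b}{ax}\right)=\frac{b}{ax}+O(x^{-2})$ gives exponent $b+O(x^{-1})$, so the ratio tends to $e^{b}e^{-b}=1$; note the expansion is valid for $b$ of either sign once $x$ is large enough that $\left|\frac{b}{ax}\right|<1$, and $ax+b>0$ eventually so $\Gamma(ax+b)$ is defined and the classical asymptotic applies along $z=ax+b\to\infty$. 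The only thing your argument presupposes is the classical Stirling asymptotic $\Gamma(z)\sim\sqrt{2\pi}\,z^{z-\frac12}e^{-z}$ itself (provable, e.g., by Laplace's method applied to \eqref{eq:Gammafunction}), which is a fair starting point given that the paper takes the stronger statement entirely on citation. In short: a correct, self-contained supplement to a result the paper merely quotes.
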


We show an inequality used in Proposition \ref{proposition:convergence}.

\begin{lemma}
\begin{equation}
(n+k)! \leq 2^{k+n} n! k!, \quad \forall n,k \in \N.
\label{eq:factorials}
\end{equation}
\end{lemma}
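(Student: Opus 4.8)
The plan is to recognize the left-hand side as a scaled binomial coefficient and to bound that coefficient by the full binomial sum. Precisely, for fixed $n,k \in \N$ one has the identity
\begin{equation*}
\binom{n+k}{k} = \frac{(n+k)!}{n!\,k!},
\end{equation*}
so the claimed inequality is equivalent to $\binom{n+k}{k} \leq 2^{n+k}$. This reformulation is the crux of the argument: it converts a statement about factorials into the elementary observation that a single binomial coefficient cannot exceed the sum of all of them.

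First I would expand $(1+1)^{n+k}$ by the binomial theorem to obtain
\begin{equation*}
2^{n+k} = (1+1)^{n+k} = \sum_{j=0}^{n+k} \binom{n+k}{j}.
\end{equation*}
Since every summand $\binom{n+k}{j}$ is a nonnegative integer, the single term corresponding to $j=k$ is bounded above by the whole sum, giving
\begin{equation*}
\binom{n+k}{k} \leq \sum_{j=0}^{n+k} \binom{n+k}{j} = 2^{n+k}.
\end{equation*}
Multiplying through by $n!\,k!$ then yields $(n+k)! \leq 2^{n+k}\,n!\,k!$, which is exactly (\ref{eq:factorials}).

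There is essentially no obstacle here: the only point requiring any care is the trivial remark that $0 \leq k \leq n+k$, so that $\binom{n+k}{k}$ genuinely appears as one of the terms in the binomial expansion, and that all terms are nonnegative so dropping the others can only decrease the sum. One could alternatively argue by a double induction on $n$ and $k$ using Pascal's rule $\binom{n+k}{k} = \binom{n+k-1}{k} + \binom{n+k-1}{k-1}$ together with the doubling $2^{n+k} = 2\cdot 2^{n+k-1}$, but the direct binomial-sum bound is shorter and avoids bookkeeping over base cases, so I would present that version.
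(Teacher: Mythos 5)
Your proof is correct. It takes a genuinely different route from the paper's: you rewrite the inequality as $\binom{n+k}{k}\leq 2^{n+k}$ and obtain it in one stroke from the binomial theorem, since $\binom{n+k}{k}$ is a single nonnegative term in the expansion $2^{n+k}=(1+1)^{n+k}=\sum_{j=0}^{n+k}\binom{n+k}{j}$ (and indeed $0\leq k\leq n+k$, so it does occur). The paper instead proceeds in two stages: it first proves the diagonal case $(2n)!\leq 2^{2n}\,n!^2$ by induction on $n$ (equivalently, the central binomial bound $\binom{2n}{n}\leq 4^n$), and then, assuming w.l.o.g.\ $n<k$, extends to the off-diagonal case by writing $(n+k)!=(2n)!\prod_{j=1}^{k-n}(2n+j)$ and estimating each factor via $2n+j\leq 2(n+j)$, so that the extra product is at most $2^{k-n}\prod_{j=1}^{k-n}(n+j)=2^{k-n}\,k!/n!$. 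Both arguments are elementary and ultimately prove the same combinatorial bound; yours is shorter, requires no case split or induction, and makes the mechanism transparent (one term of a sum is at most the sum), while the paper's is entirely self-contained at the level of factorial manipulations and never invokes the binomial expansion. Either is perfectly adequate for the way \eqref{eq:factorials} is used in Step 2 of Proposition \ref{proposition:convergence}, where only the constant $2^{n+k}$ matters.
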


\begin{proof}
Let us observe first that 
\begin{equation}
(2n)! \leq 2^{2n} n!^2, \quad \forall n \in \N.
\label{eq:doublefactorial}
\end{equation} This inequality follows by induction, since, for every $n\in \N$, 
\begin{eqnarray}
\left(2(n+1) \right)! &=& (2n)!(2n+1)(2n+2) \nonumber \\
&\leq & (2n)!2^2(n+1)^2 \leq 2^{2(n+1)}(n+1)!. \nonumber
\end{eqnarray} To show (\ref{eq:factorials}), we assume, w.l.o.g., that $n<k$. Then, using (\ref{eq:doublefactorial}),
\begin{eqnarray}
(n+k)! &= & (2n)! \prod_{j=1}^{k-n}(2n + j) \nonumber \\
& \leq & (2n)! 2^{k-n} \prod_{j=1}^{k-n}(n + j) \leq 2^{n+k}n!k!. \nonumber 
\end{eqnarray}
\end{proof}

\section{Some properties of Bessel functions}
Let $\nu \in \R$. The Bessel function of order $\nu$ of the first kind is (\cite[9.1.10, p.360]{AS}) 
\begin{equation}
J_{\nu}(z) := \sum_{n=0}^{\infty}\frac{(-1)^n}{n! \Gamma(n+\nu +1)} \left( \frac{z}{2} \right)^{2n +\nu}, \,\,\, \forall z\in [0,\infty). 
\label{eq:Besselseries}
\end{equation} We denote by $\left\{j_{\nu,n} \right\}_{n \in \N^*}$ the increasing sequence of zeros of $J_{\nu}$, which are real for any $\nu \geq 0$ and enjoy the following properties (see \cite[9.5.2, p.370]{AS} and  \cite[Proposition 7.8, p.135]{KomLor}).
\begin{eqnarray}
\nu < j_{\nu,n} < j_{\nu, n+1}, \, \forall n \in \N^*,  \label{eq:decreasingzeros} \\
j_{\nu, n +1} - j_{\nu,n} \rightarrow \pi, \, \textrm{ as } n \rightarrow \infty. \label{eq:convergingzeros} 
\end{eqnarray} We also have the integral formula (\cite[11.4.5, p.485]{AS})
\begin{equation}
\int_0^1 y J_{\nu}(j_{\nu,n}y) J_{\nu}(j_{\nu,m}y) \ud y = \frac{1}{2}|J_{\nu +1}(j_{\nu,n})|^2 \delta_{n,m}, \,\,\, \forall n,m \in \N^*. 
\label{eq:Besselorthogonality} 
\end{equation} This allows to show the following.  
\begin{lemma}\cite[p.40]{Hig}
Let $\nu \geq 0$. The family $\left\{ w_n \right\}_{n\in\N^*}$ defined by
\begin{equation*}
w_n(z) := \frac{\sqrt{2z}}{|J_{\nu+1}(j_{\nu,n})|}J_{\nu}(j_{\nu,n}z), \quad \forall z\in (0,1),
\end{equation*} is an orthonormal basis of $L^2(0,1)$. In particular, if $f \in L^2(0,1)$ and $d_n := \int_0^1 f(z) w_n(z) \ud z, \, \forall n \in \N^* $, then $ \|f\|_{L^2(0,1)}^2 = \sum_{n=1}^{\infty}|d_n|^2.$ 
\label{lemma:Besselbase}
\end{lemma}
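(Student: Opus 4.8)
The plan is to treat the two assertions separately: orthonormality, which is immediate from the integral identity already recorded in (\ref{eq:Besselorthogonality}), and completeness, which is the genuine content. Since the Parseval identity $\|f\|_{L^2(0,1)}^2=\sum_{n}|d_n|^2$ is, by standard Hilbert space theory, equivalent to $\{w_n\}$ being a \emph{complete} orthonormal system, it suffices to establish orthonormality together with density of $\vect\{w_n : n\in\N^*\}$ in $L^2(0,1)$. First I would verify orthonormality directly: for $n,m\in\N^*$,
\begin{equation*}
\langle w_n, w_m\rangle = \frac{2}{|J_{\nu+1}(j_{\nu,n})|\,|J_{\nu+1}(j_{\nu,m})|}\int_0^1 z\, J_\nu(j_{\nu,n}z)\,J_\nu(j_{\nu,m}z)\ud z,
\end{equation*}
and (\ref{eq:Besselorthogonality}) evaluates the integral to $\tfrac12|J_{\nu+1}(j_{\nu,n})|^2\delta_{n,m}$, giving $\langle w_n,w_m\rangle=\delta_{n,m}$.

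Next I would reduce the completeness to a weighted space. The map $U\colon L^2((0,1),z\ud z)\to L^2(0,1)$, $(Uv)(z):=\sqrt{z}\,v(z)$, is a unitary isomorphism, and it carries the unnormalised Bessel function $z\mapsto J_\nu(j_{\nu,n}z)$ to $\tfrac{1}{\sqrt 2}|J_{\nu+1}(j_{\nu,n})|\,w_n$. Hence $\{w_n\}$ is complete in $L^2(0,1)$ if and only if $\{J_\nu(j_{\nu,n}\cdot)\}_{n\in\N^*}$ is complete in $L^2((0,1),z\ud z)$; the advantage of the weighted picture is that these functions are exactly the eigenfunctions of a natural self-adjoint operator.

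The core step is this weighted completeness, where I would invoke Sturm--Liouville spectral theory. Writing $y(z)=J_\nu(j_{\nu,n}z)$ and using the Bessel equation $z^2 J_\nu''+zJ_\nu'+(z^2-\nu^2)J_\nu=0$, one obtains the divergence form $-(zy')'+\tfrac{\nu^2}{z}y=j_{\nu,n}^2\,z\,y$, so the $J_\nu(j_{\nu,n}\cdot)$ are precisely the eigenfunctions, with eigenvalues $j_{\nu,n}^2$, of the singular operator
\begin{equation*}
\mathcal L v := -\frac{1}{z}\left(z v'\right)' + \frac{\nu^2}{z^2}\,v
\end{equation*}
on $L^2((0,1),z\ud z)$, equipped with the Dirichlet condition $v(1)=0$ at the regular endpoint and the natural regularity condition at the singular endpoint $z=0$. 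I would check that this realisation is self-adjoint, nonnegative, and has compact resolvent; the spectral theorem then yields a complete orthonormal basis of eigenfunctions. Since for each $\lambda$ the solution space obeying both boundary conditions is at most one-dimensional, the eigenvalues are simple, they are exactly the numbers $j_{\nu,n}^2$ (as $\lambda$ is an eigenvalue iff $J_\nu(\sqrt\lambda)=0$), and the corresponding eigenfunction is a multiple of $J_\nu(j_{\nu,n}\cdot)$. Transporting back through $U$ gives the claim, which is the classical Fourier--Bessel completeness theorem recorded in \cite{Hig}.

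The hard part will be the singular endpoint $z=0$. One must identify the correct self-adjoint extension: the problem is in the limit-point case at $0$ when $\nu\geq 1$ (no boundary condition is needed there), but in the limit-circle case when $0\leq\nu<1$, where both local solutions $z^{\pm\nu}$ lie in $L^2((0,1),z\ud z)$ and the regularity requirement must single out the $J_\nu$ solution over the singular $Y_\nu$ solution to pin down the extension. The second delicate point is compactness of the resolvent, which I would deduce from a compact embedding of the form domain $\{v : \sqrt{z}\,v,\ \sqrt{z}\,v'\in L^2(0,1),\ v(1)=0\}$ into $L^2((0,1),z\ud z)$, the weighted analogue of the Rellich embedding, where the weight $z$ and the value of $\nu$ genuinely enter. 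An alternative, complex-analytic route would observe that if $f\perp w_n$ for all $n$, then the entire function $\lambda\mapsto \lambda^{-\nu}\int_0^1\sqrt{z}\,f(z)\,J_\nu(\lambda z)\ud z$ vanishes at every $j_{\nu,k}$; being of exponential type $1$ with zeros $j_{\nu,k}\sim k\pi$ sitting at the \emph{critical} density $1/\pi$, one would factor out $\lambda^{-\nu}J_\nu(\lambda)$ and close the argument with growth estimates to force $f=0$. I would nonetheless favour the Sturm--Liouville argument, precisely because this borderline zero density makes the entire-function bookkeeping the more error-prone of the two.
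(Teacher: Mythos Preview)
The paper does not give its own proof of this lemma: it is stated with the citation \cite[p.40]{Hig} and then used without argument. So there is no proof in the paper to compare against; the author simply imports the classical Fourier--Bessel completeness theorem as a black box.

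Your outline is a correct and standard route to that theorem. Orthonormality is indeed immediate from (\ref{eq:Besselorthogonality}), and the reduction via the unitary $U$ to eigenfunctions of the Bessel-type Sturm--Liouville operator on $L^2((0,1),z\ud z)$ is the natural framing. You have also correctly identified where the real work lies: the endpoint analysis at $z=0$ (limit-point for $\nu\geq 1$, limit-circle for $0\leq\nu<1$, with the boundary condition selecting the $J_\nu$ branch) and the compactness of the resolvent. One small inaccuracy: in the limit-circle range your phrase ``both local solutions $z^{\pm\nu}$'' breaks down at $\nu=0$, where the two Frobenius solutions are $1$ and $\log z$; the argument still goes through, but the description of the second solution should be adjusted there. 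The alternative entire-function argument you sketch is also viable and is in fact closer to the style of proof one finds in Higgins and in Watson; your caution about the borderline density of the zeros $j_{\nu,k}\sim k\pi$ is well placed, since a naive Carlson-type argument does not apply and one needs the finer growth of the Hankel transform.
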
 We recall that $\forall \nu \in \R$, the Bessel function $J_{\nu}$ satisfies the following differential equation (see \cite[9.1.1, p.358]{AS})
\begin{equation}
z^2 J_{\nu}''(z) + z J_{\nu}'(z) + (z^2 - \nu^2)J_{\nu}(z) =0, \,\,\, \forall z\in(0,+\infty),
\label{eq:Besseldifferentialequation}
\end{equation} and the recurrence relation (see \cite[9.1.27, p.361]{AS}),
\begin{equation}
2J_{\nu}'(z) = J_{\nu-1}(z) + J_{\nu+1}(z), \,\,\, \forall z \in (0,+\infty).
\label{eq:Besselrecurrencerelation}
\end{equation} \par 

\subsection*{Asymptotic behaviour} We recall the asymptotic behaviour of $J_{\nu}$ for large arguments and near zero.

\begin{lemma}\cite[Lemma 7.2, p.129]{KomLor}
For any $\nu \in \R$, 
\begin{equation*}
J_{\nu}(z) = \sqrt{\frac{2}{\pi z}} \cos \left( z - \frac{\nu \pi}{2} - \frac{\pi}{4} \right) + \underset{z \rightarrow \infty}{O} \left( \frac{1}{z\sqrt{z}} \right).
\end{equation*} 
\label{lemma:asymptoticBessel}
\end{lemma}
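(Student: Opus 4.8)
The plan is to read the asymptotics directly off the Bessel differential equation (\ref{eq:Besseldifferentialequation}) by turning it into a small perturbation of the harmonic oscillator. First I would kill the first-order term through the substitution $J_\nu(z) = z^{-1/2} u(z)$. A direct computation (expanding $J_\nu'$ and $J_\nu''$ and regrouping) turns (\ref{eq:Besseldifferentialequation}) into the normal form
\[
u''(z) + \left( 1 + \frac{\tfrac14 - \nu^2}{z^2} \right) u(z) = 0, \qquad z > 0 .
\]
The coefficient tends to $1$ like $z^{-2}$, so $u$ should behave to leading order like a solution of $u'' + u = 0$, i.e.\@ like a pure sinusoid $R\cos(z - \phi)$. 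This already explains the shape of the claimed expansion and the exponent $-1/2$ on $z$.

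To make this rigorous I would recast the equation as the Volterra integral equation
\[
u(z) = R\cos(z - \phi) + \int_z^\infty \sin(\tau - z)\, \frac{\nu^2 - \tfrac14}{\tau^2}\, u(\tau) \ud\tau ,
\]
in which the kernel $\sin(\tau-z)$ is the Green's function of $u'' + u$, chosen so that the correction is controlled by the tail at $+\infty$. A contraction/Gronwall argument on a space of bounded functions over $[z_0,\infty)$ gives, for each prescribed pair $(R,\phi)$, a unique bounded solution, and since $\int_z^\infty \tau^{-2}\ud\tau = 1/z$ the integral term is $\underset{z\to\infty}{O}(1/z)$. Undoing the substitution then yields
\[
J_\nu(z) = R\, z^{-1/2}\cos(z - \phi) + \underset{z\to\infty}{O}\!\left( \frac{1}{z\sqrt z} \right),
\]
for constants $R = R(\nu)$ and $\phi = \phi(\nu)$ that this local analysis alone cannot determine.

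The main obstacle is exactly the determination of the amplitude $R = \sqrt{2/\pi}$ and the phase $\phi = \nu\pi/2 + \pi/4$: the reduction above is purely local at infinity and is blind to the global normalisation of $J_\nu$ fixed by the series (\ref{eq:Besselseries}). To pin them down I would, for $\nu > -\tfrac12$, start from Poisson's integral representation
\[
J_\nu(z) = \frac{2\,(z/2)^\nu}{\sqrt\pi\,\Gamma(\nu + \tfrac12)} \int_0^1 (1-t^2)^{\nu - \frac12} \cos(zt)\, \ud t ,
\]
and extract the large-$z$ behaviour by stationary phase, the dominant contribution coming from the endpoint $t = 1$ (where the weight has an algebraic singularity): a local change of variables reduces it to a Gamma integral that produces precisely the factor $\sqrt{2/(\pi z)}$ together with the shift $-\nu\pi/2 - \pi/4$. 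Finally, to reach every real $\nu$—in particular $\nu \le -\tfrac12$, outside the range of Poisson's formula—I would either analytically continue in $\nu$, both sides being analytic in $\nu$ for fixed $z>0$, or propagate the result through the recurrence relation (\ref{eq:Besselrecurrencerelation}), which is consistent with the stated asymptotics. Since only $\nu = (\alpha-1)/(2-\alpha) \ge 0$ occurs in the present article, the restriction $\nu > -\tfrac12$ is in any case harmless for our purposes.
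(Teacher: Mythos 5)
Your proposal is correct in substance, but it is worth pointing out that the paper does not prove this lemma at all: it is imported as a black box from Komornik--Loreti \cite[Lemma 7.2, p.129]{KomLor}, so there is no internal proof to compare against. What you supply is essentially the classical argument, done in two independent layers that match the logical structure of the statement: the Liouville substitution $J_\nu(z)=z^{-1/2}u(z)$ reducing (\ref{eq:Besseldifferentialequation}) to $u''+\bigl(1+(\tfrac14-\nu^2)z^{-2}\bigr)u=0$, plus the Volterra equation with kernel $\sin(\tau-z)$ solved at infinity by contraction, correctly yields the \emph{shape} $R\,z^{-1/2}\cos(z-\phi)+O(z^{-3/2})$ (your normal form and the identity $v''+v=g$ for $v(z)=\int_z^\infty\sin(\tau-z)g(\tau)\ud\tau$ both check out, and $\int_z^\infty \tau^{-2}\ud\tau=1/z$ gives the stated error); the Poisson representation with endpoint analysis at $t=1$ then correctly pins down $R=\sqrt{2/\pi}$ and $\phi=\nu\pi/2+\pi/4$ for $\nu>-\tfrac12$, which, as you note, covers every $\nu=(\alpha-1)/(2-\alpha)\ge 0$ arising in this article. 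This buys a self-contained proof where the paper buys only a citation.

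Two small points deserve attention. First, in the Volterra step you should say explicitly why $z^{1/2}J_\nu(z)$ is one of the solutions you construct: the solutions with $(R,\phi)=(1,0)$ and $(1,\pi/2)$ have Wronskian tending to $1$, hence span the solution space, so every solution --- in particular $z^{1/2}J_\nu$ --- has the asserted form for some $(R,\phi)$. Second, of your two routes to general real $\nu$, the analytic-continuation one is the fragile option: an asymptotic relation with an unquantified $O(z^{-3/2})$ remainder does not continue analytically in $\nu$ without uniform control of the remainder on compact $\nu$-sets. The recurrence route is sound, but note that the relation you cite, (\ref{eq:Besselrecurrencerelation}), is the derivative relation (misprinted in the paper, where the correct form is $2J_\nu'=J_{\nu-1}-J_{\nu+1}$); what propagates the asymptotics downward directly is its companion $J_{\nu-1}(z)+J_{\nu+1}(z)=\frac{2\nu}{z}J_\nu(z)$, in which the $\frac{2\nu}{z}J_\nu$ term is $O(z^{-3/2})$ and $\cos\bigl(z-\tfrac{(\nu-1)\pi}{2}-\tfrac{\pi}{4}\bigr)=-\cos\bigl(z-\tfrac{(\nu+1)\pi}{2}-\tfrac{\pi}{4}\bigr)$ makes the phases match. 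With these touch-ups your argument is complete.
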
 

\begin{lemma}\cite[9.1.7, p.360]{AS}
For any $\nu \in \R \setminus \left\{-\N^*\right\}$, 
\begin{equation*}
J_{\nu}(z) \underset{z \rightarrow 0}{\sim} \frac{z^{\nu}}{2^{\nu}\Gamma(\nu+1)}.
\end{equation*}
\label{lemma:asymptoticBesselatzero}
\end{lemma}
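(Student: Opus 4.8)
The plan is to read the asymptotic directly off the power series definition (\ref{eq:Besselseries}) of $J_\nu$, with no machinery beyond the elementary observation that a convergent power series is continuous at the origin. First I would factor out the lowest-order monomial, writing
\[
J_\nu(z) = \left( \frac{z}{2} \right)^{\nu} \sum_{n=0}^{\infty} \frac{(-1)^n}{n!\,\Gamma(n+\nu+1)} \left( \frac{z}{2} \right)^{2n} =: \left( \frac{z}{2} \right)^{\nu} g(z),
\]
so that the prefactor $(z/2)^{\nu}$ already carries the claimed leading power of $z$ and the auxiliary function $g$ collects the rest.

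Next I would observe that $g$ is given by a power series in the variable $(z/2)^2$ whose coefficients $a_n := (-1)^n/(n!\,\Gamma(n+\nu+1))$ satisfy $|a_{n+1}/a_n| = 1/\big((n+1)(n+\nu+1)\big) \to 0$; hence the series has infinite radius of convergence and converges for every $z$, uniformly on any bounded interval. In particular $g$ is continuous at $z=0$, which lets me pass to the limit term by term: every summand with $n\ge 1$ carries a positive power of $z$ and therefore vanishes as $z\to 0$, leaving
\[
\lim_{z\to 0} g(z) = g(0) = \frac{1}{\Gamma(\nu+1)}.
\]
Here the hypothesis $\nu \in \R \setminus (-\N^*)$ is exactly what is needed: it guarantees that $\nu+1$ is not a non-positive integer, so that $\Gamma(\nu+1)$ is finite and nonzero and the limit above is a genuine nonzero constant (were $\nu\in-\N^*$, the leading coefficient would vanish and the asymptotic would be governed by a higher power of $z$).

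Combining the two steps then yields
\[
\lim_{z\to 0} \frac{J_\nu(z)}{z^{\nu}/(2^{\nu}\Gamma(\nu+1))} = \lim_{z\to 0} \Gamma(\nu+1)\, g(z) = 1,
\]
which is precisely the asserted equivalence $J_\nu(z) \sim z^{\nu}/(2^{\nu}\Gamma(\nu+1))$ as $z\to 0$. The only step deserving comment is the interchange of limit and summation used to evaluate $g(0)$, but this presents no real obstacle: the uniform convergence of the series on a neighborhood of the origin makes the term-by-term limit legitimate, and everything reduces to isolating the $n=0$ term.
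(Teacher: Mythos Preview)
Your argument is correct and is the standard way to obtain this asymptotic: factor out $(z/2)^\nu$ from the series (\ref{eq:Besselseries}), observe that the remaining entire series $g(z)$ is continuous at the origin with $g(0)=1/\Gamma(\nu+1)\neq 0$ under the hypothesis $\nu\notin -\N^*$, and conclude. The paper itself supplies no proof of this lemma; it is simply quoted from \cite[9.1.7]{AS}, so there is nothing to compare against and your short derivation fills that gap cleanly.
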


The following asymptotic result is important in the proof of Proposition \ref{proposition:regularisingeffect}. We give the proof for the sake of completeness.
\begin{lemma}
Let $\nu \in \R^+$. Then,
\begin{equation}
\sqrt{j_{\nu,k}} |J_{\nu +1} (j_{\nu,k})| = \sqrt{\frac{2}{\pi}} + \underset{k\rightarrow \infty}{O}\left( \frac{1}{j_{\nu,k}} \right).
\label{eq:lemmaasymptoticBesselmajoration}
\end{equation} In particular, there exists a constant $C_1 >0$ such that for all $k \in \N^*$,
\begin{equation*}
\frac{1}{|J_{\nu +1} (j_{\nu,k})|} \leq C_1 \sqrt{j_{\nu,k}}.
\end{equation*}
\label{lemma:asymptoticBesselaux}
\end{lemma}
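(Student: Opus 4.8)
The plan is to extract everything from the single large-argument asymptotic of Lemma~\ref{lemma:asymptoticBessel}, applied both to $J_\nu$ (to exploit that $j_{\nu,k}$ is a zero) and to $J_{\nu+1}$ (to evaluate the quantity of interest). Throughout I use that $j_{\nu,k}\to\infty$ as $k\to\infty$, which follows from (\ref{eq:decreasingzeros}) together with (\ref{eq:convergingzeros}), since the consecutive gaps tend to $\pi$ and hence $j_{\nu,k}\to\infty$; this is what makes the $\underset{k\to\infty}{O}(1/j_{\nu,k})$ error meaningful. Write $\theta_k := j_{\nu,k} - \frac{\nu\pi}{2} - \frac{\pi}{4}$ for the relevant phase.

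First I would insert $z = j_{\nu,k}$ into the asymptotic for $J_\nu$. Since $J_\nu(j_{\nu,k})=0$, multiplying the resulting identity by $\sqrt{\pi j_{\nu,k}/2}$ shows $\cos\theta_k = \underset{k\to\infty}{O}(1/j_{\nu,k})$; consequently $|\sin\theta_k| = \sqrt{1-\cos^2\theta_k} = 1 + \underset{k\to\infty}{O}(1/j_{\nu,k}^2)$. Next I would apply the same asymptotic to $J_{\nu+1}$ at $z=j_{\nu,k}$, whose phase is $j_{\nu,k} - \frac{(\nu+1)\pi}{2} - \frac{\pi}{4} = \theta_k - \frac{\pi}{2}$, and use the elementary identity $\cos(\theta_k - \frac{\pi}{2}) = \sin\theta_k$ to get $J_{\nu+1}(j_{\nu,k}) = \sqrt{\tfrac{2}{\pi j_{\nu,k}}}\,\sin\theta_k + \underset{k\to\infty}{O}(j_{\nu,k}^{-3/2})$. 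Taking absolute values, substituting $|\sin\theta_k| = 1 + O(j_{\nu,k}^{-2})$, and multiplying by $\sqrt{j_{\nu,k}}$ then yields $\sqrt{j_{\nu,k}}\,|J_{\nu+1}(j_{\nu,k})| = \sqrt{\tfrac{2}{\pi}} + \underset{k\to\infty}{O}(1/j_{\nu,k})$, which is (\ref{eq:lemmaasymptoticBesselmajoration}). The only thing to watch here is the bookkeeping of the error terms: the $O(j_{\nu,k}^{-3/2})$ tail of the asymptotic becomes $O(j_{\nu,k}^{-1})$ after multiplication by $\sqrt{j_{\nu,k}}$, which is exactly the claimed rate, so no term is lost.

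For the ``in particular'' bound I would argue in two stages. Since the left-hand side of (\ref{eq:lemmaasymptoticBesselmajoration}) converges to $\sqrt{2/\pi}>0$, there is a $k_0$ so that $\sqrt{j_{\nu,k}}\,|J_{\nu+1}(j_{\nu,k})| \geq \tfrac12\sqrt{2/\pi}$ for all $k\geq k_0$, which rearranges to $1/|J_{\nu+1}(j_{\nu,k})| \leq \sqrt{2\pi}\,\sqrt{j_{\nu,k}}$ on that range. For the finitely many indices $k<k_0$ it suffices to know $J_{\nu+1}(j_{\nu,k})\neq 0$, and this is already guaranteed by the paper's own tools: taking $n=m$ in (\ref{eq:Besselorthogonality}) gives $\tfrac12|J_{\nu+1}(j_{\nu,k})|^2 = \int_0^1 y\,J_\nu(j_{\nu,k}y)^2\,\ud y > 0$, since the integrand is nonnegative and not identically zero. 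Hence each of the finitely many values $1/|J_{\nu+1}(j_{\nu,k})|$ is finite, and enlarging the constant to $C_1 := \max\bigl\{\sqrt{2\pi},\ \max_{1\leq k<k_0} j_{\nu,k}^{-1/2}/|J_{\nu+1}(j_{\nu,k})|\bigr\}$ gives the uniform bound for all $k\in\N^*$. I expect the asymptotic step — in particular justifying $|\sin\theta_k| = 1 + O(j_{\nu,k}^{-2})$ cleanly and propagating the error rate through the multiplication by $\sqrt{j_{\nu,k}}$ — to be the only genuinely delicate point; the nonvanishing at small indices is immediate from (\ref{eq:Besselorthogonality}).
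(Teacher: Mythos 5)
Your proof is correct and follows essentially the same route as the paper: both apply Lemma~\ref{lemma:asymptoticBessel} to $J_{\nu+1}$ at $z=j_{\nu,k}$, use $\cos(\theta_k-\tfrac{\pi}{2})=\sin\theta_k$, and then apply the same asymptotic to $J_\nu$ at its zero to get $\cos\theta_k=O(1/j_{\nu,k})$ and hence $|\sin\theta_k|=1+O(1/j_{\nu,k}^2)$. Your explicit treatment of the ``in particular'' bound --- nonvanishing of $J_{\nu+1}(j_{\nu,k})$ at the finitely many small indices via (\ref{eq:Besselorthogonality}) with $n=m$ --- is a welcome detail the paper leaves implicit, but it does not change the argument.
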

\begin{proof}
Using Lemma \ref{lemma:asymptoticBessel}, for $\nu +1$ and $x = j_{\nu,k}$,
\begin{eqnarray}
\sqrt{j_{\nu,k}}|J_{\nu +1}(j_{\nu,k})| &=& \sqrt{\frac{2}{\pi}} \left| \cos \left( j_{\nu,k} - \frac{\pi(\nu +1)}{2} - \frac{\pi}{4}  \right) \right| + \underset{k\rightarrow \infty}{O}\left( \frac{1}{j_{\nu,k}} \right) \nonumber \\
& = & \sqrt{\frac{2}{\pi}} \left| \sin \left( j_{\nu,k} - \frac{\pi \nu}{2} - \frac{\pi}{4}  \right)\right| + \underset{k \rightarrow \infty}{O}\left( \frac{1}{j_{\nu,k}} \right). \nonumber 
\end{eqnarray} Using again Lemma \ref{lemma:asymptoticBessel} with $\nu$ and $x = j_{\nu,k}$, we have that
\begin{equation*}
\cos \left( j_{\nu,k} - \frac{\pi \nu}{2} - \frac{\pi}{4}  \right) = \underset{k \rightarrow \infty}{O}\left( \frac{1}{j_{\nu,k}} \right),
\end{equation*} which gives
\begin{equation*}
\left|\sin \left( j_{\nu,k} - \frac{\pi \nu}{2} - \frac{\pi}{4}  \right) \right| = \sqrt{1 + \underset{k \rightarrow \infty}{O}\left( \frac{1}{j_{\nu,k}^2}\right)} = 1 + \underset{k \rightarrow \infty}{O} \left( \frac{1}{j_{\nu,k}} \right)
\end{equation*} and then (\ref{eq:lemmaasymptoticBesselmajoration}).

\end{proof}

\end{appendix}

\addcontentsline{toc}{section}{Bibliography}       
\bibliographystyle{plain}                            

\begin{thebibliography}{}

\end{thebibliography}


\begin{thebibliography}{99}
\bibitem{AS} M. Abramowitz, I. Stegun. Handbook of mathematical functions with formulas, graphs, and mathematical tables. \emph{ National Bureau of Standards. App. Math. series.} vol. 55. 1964.

\bibitem{ACF} F. Alabau-Boussourira, P. Cannarsa, G. Fragnelli. Carleman estimates for degenerate parabolic operators with applications to null controllability. \emph{ J. Evol. Equ.} vol. 6:2, pp. 161-204. 2006.

\bibitem{Allaire} G. Allaire. Introduction to Numerical Analysis and Optimization. \emph{Oxford Univ. Press.} 2007. 

\bibitem{CMP} M. Campiti, G. Metafune, D. Pallara. Degenerate self-adjoint evolution equations on the unit interval. \emph{Semigroup Forum.} vol. 57. pp. 1-36. 1998.

\bibitem{CFR} P. Cannarsa, G. Fragnelli, D. Rocchetti. Controllability results for a class of one-dimensional degenerate parabolic problems in nondivergence form. \emph{Discr. and Cont. Din. Syst. Series S.} vol. 6:3, pp. 687-701. 2013.

\bibitem{CMV4} P. Cannarsa, P. Martinez, J. Vancostenoble. Persistent regional null controllability for a class of degenerate parabolic equations. \emph{Comm. in Pure and Applied Anal.} vol. 3:4, pp. 607-635.  

\bibitem{CMV5} P. Cannarsa, P. Martinez, J. Vancostenoble. Null controllability of degenerate heat equations. \emph{Adv. in Diff. Eq.} vol. 10:2, pp.153-190. 2005.

\bibitem{CMV} P. Cannarsa, P. Martinez, J. Vancostenoble. Carleman estimates for a class of degenerated parabolic operators. \emph{SIAM J. Control Optim.} vol. 47:1, pp.1-19, 2008.

\bibitem{CMV2} P. Cannarsa, P. Martinez, J. Vancostenoble. Carleman estimates and null controllability for boundary-degenerate parabolic operators. \emph{C. R. Acad. Sci. S\'{e}r. I Math}. vol. 347, pp. 147-152. 2009.

\bibitem{CTY} P. Cannarsa, J. Tort, M. Yamamoto. Unique continuation and approximate controllability for a degenerate parabolic equation. \emph{Applicable Analysis.} vol. 91:8, pp. 1409-1425. 2012.

\bibitem{Coron} Coron, Jean-Michel. Control and nonlinearity. Mathematical Surveys and Monographs, 136. \emph{American Mathematical Society}, Providence, RI, 2007.

\bibitem{Fliess}
M. Fliess, J. L. L\'{e}vine, P. Martin, P. Rouchon. Flatness and defect of non-linear systems: introductory theory and examples. \emph{Int. J. of Control}. vol. 61:6, pp. 1327-1361, 1995. 

\bibitem{Gueye} M. Gueye. Exact boundary controllability of 1-D parabolic and hyperbolic degenerate equations. \emph{SIAM J. Control Opt.} 52:4, pp. 2037-2054. 2014.

\bibitem{Hig} J. R. Higgins. Completeness and basis properties of sets of special functions. \emph{Cambridge Univ. Press. Cambridge Tracts in Mathematics,} Vol. 72. 1978.

\bibitem{KomLor} V. Komornik, P. Loreti. Fourier series in Control Theory. \emph{Springer Monographs in Mathematics}. 2005.

\bibitem{LMR} B. Laroche, P. Martin, P. Rouchon. Motion planning for the heat equation. \emph{Int. J. Robust Nonlinear Control,}  10:8, pp. 629-643, 2000.

\bibitem{MRR} P. Martin, L. Rosier, P. Rouchon. Null controllability of the heat equation using flatness. \emph{Automatica}, vol. 50, pp. 3067-3076. 2014.

\bibitem{MRR2} P. Martin, L. Rosier, P. Rouchon. Null controllability of one-dimensional parabolic systems using flatness. \emph{arXiv:1410.2588}. 2014.

\bibitem{MRR3} P. Martin, L. Rosier, P. Rouchon. Controllability of the 1D Schrodinger equation by the flatness method. \emph{arXiv:1404.0814}. 2014.

\bibitem{V} J. Vancostenoble. Improved Hardy-Poincaré inequalities and sharp Carleman estimates for degenerate/singular parabolic problems. \emph{Discr. and Cont. Din. Syst. Series S.}  vol. 4:3, pp. 761-790. 2011.

\bibitem{Widder} D.V. Widder. The Heat Equation. \emph{Academic Press. Pure and Applied Mathematics series,} Vol. 67. 1975. 

\end{thebibliography}

\end{document}